\crefname{assumption}{Assumption}{Assumptions}
\title{Lipschitz Continuity Results for Minimax Solutions of Path-Dependent Hamilton--Jacobi Equations\thanks{Submitted to the editors 23.12.2024.
\funding{This work was supported by the Ministry of Science and Higher Education of the Russian Federation within a state contract (project FEWS-2024-0009).}}}
\author{Mikhail I. Gomoyunov\thanks{N. N. Krasovskii Institute of Mathematics and Mechanics of the Ural Branch of the Russian Academy of Sciences, Ekaterinburg, 620108, Russia;
Ural Federal University, Ekaterinburg, 620002, Russia;
Udmurt State University, Izhevsk, 426034, Russia (\email{m.i.gomoyunov@gmail.com}).}}
\def\Lip{\operatorname{Lip}}
\def\rd{\mathrm{d}}
\begin{document}

\maketitle

\begin{abstract}
We consider a Cauchy problem for a (first-order) path-dependent Hamilton--Jacobi equation with coinvariant derivatives and a right-end boundary condition.
Such problems arise naturally in the study of properties of the value functional in (deterministic) optimal control problems and differential games for time-delay systems.
We prove that, under certain assumptions on the Hamiltonian and the boundary functional, a minimax (generalized) solution of the Cauchy problem satisfies Lipschitz conditions both in time and functional variables.
In the latter case, Lipschitz conditions are formulated with respect to the uniform norm and some special norm of the space of continuous functions.
\end{abstract}

\begin{keywords}
Path-dependent Hamilton--Jacobi equation,
coinvariant derivatives,
Cauchy problem,
minimax solution,
viscosity solution,
Lipschitz condition.
\end{keywords}

\begin{MSCcodes}
35F21, 49L25, 35D40
\end{MSCcodes}

\section{Introduction}

    The theory of generalized solutions of boundary value problems for various path-dependent Hamilton--Jacobi equations is currently quite well developed.
    In particular, basic results on the existence and uniqueness of a generalized solution have been obtained;
    its continuous dependence on the parameters of the problem has been established;
    several approaches to its approximation have been proposed;
    and applications for studying optimal control problems and differential games for time-delay systems have been given.
    One of the further directions of research within this theory, which has not been covered in the literature so far, is related to the analysis of regularity properties of a generalized solution.
    The present paper belongs to this direction of research and is devoted to the study of Lipschitz continuity properties of a generalized solution with respect to both time and functional variables.

    To be precise, we consider a (first-order) path-dependent Hamilton--Jacobi equation with coinvariant derivatives and a Cauchy problem for this equation under a right-end boundary condition.
    Recall that such Hamilton--Jacobi equations arise, for example, as Bellman and Isaacs equations in (deterministic) optimal control problems and differential games for time-delay systems.
    For the considered Cauchy problem, both the minimax \cite{Subbotin_1995} and viscosity \cite{Crandall_Lions_1983} approaches to the notion of a generalized solution were successfully developed, and their equivalence was established under rather general assumptions.
    The reader is referred to a recent survey paper \cite{Gomoyunov_Lukoyanov_RMS_2024} focused on the detailed exposition of the main elements of the theory under discussion and including a large bibliography.
    We note only that, within the present paper, we prefer to deal with minimax solutions in view of the choice of the technique for proving the main results, while the results themselves can be appropriately reformulated in terms of viscosity solutions, understood in one particular sense or another.

    The paper contains a number of results on Lipschitz continuity properties of the minimax solution of the considered Cauchy problem both in time and functional variables.
    The Lipschitz conditions in the functional variable are formulated in local and global form and with respect to the uniform norm and some special norm of the space of continuous functions.
    Note that similar Lipschitz continuity properties are known for the value functional in optimal control problems and differential games for time-delay systems, and their proofs are relatively straightforward and based on Gronwall inequality.
    However, such arguments cannot be directly applied to the minimax solution in the case of a general Hamiltonian, which is not necessarily of Bellman or Isaacs form, and a more complex technique is required.
    The approach proposed in this paper goes back to the proof of the so-called comparison principle for (lower and upper) minimax solutions of path-dependent Hamilton--Jacobi equations and uses the technique of coinvariantly smooth Lyapunov--Krasovskii functionals \cite[Lemma 7.7]{Lukoyanov_2003_1} (see also \cite[Lemma 1]{Gomoyunov_Lukoyanov_RMS_2024}).
    Moreover, it utilizes some ideas proposed in the theory of viscosity solutions of (usual) Hamilton--Jacobi equations with first-order partial derivatives when Lipschitz continuity properties of such solutions are investigated (see, e.g., \cite[Theorem 8.1]{Barles_2013}).

    As a secondary motivation for the paper, the following two aspects can be mentioned.
    First, Lipschitz continuity properties close to those established in the paper are sometimes included in a definition of a viscosity solution of a Cauchy problem for a path-dependent Hamilton--Jacobi equation (see, e.g., \cite{Kaise_2015,Zhou_2019,Plaksin_2020_JOTA,Zhou_2020_1,Plaksin_2021_SIAM,Kaise_2022,Zhou_2022,Hernandez-Hernandez_Kaise_2024}).
    These additional requirements are mainly imposed in order to obtain the uniqueness of a viscosity solution.
    On the other hand, the existence of such a viscosity solution is proved only in special cases where the Cauchy problem arises from some optimal control problem or differential game for a time-delay system and follows from the fact that the corresponding value functional is a viscosity solution.
    In this regard, taking the existence of a minimax solution into account, the results of the paper may be useful in proving the existence of a viscosity solution in the case of a general Hamiltonian.
    Second, the local Lipschitz continuity property in the functional variable with respect to the special norm allows us to obtain a novel and convenient infinitesimal criterion for a minimax solution in terms of a pair of inequalities for its derivatives in finite-dimensional (or single-valued) directions \cite[Section 5.4]{Lukoyanov_2006_IMM_Eng}.

    The rest of the paper is organized as follows.
    The considered Cauchy problem for the path-dependent Hamilton--Jacobi equation with coinvariant derivatives and the right-end boundary condition is described in \cref{section_1}, where also the notion of a minimax solution of this problem is recalled.
    The Lipschitz continuity results for the minimax solution in the functional variable with respect to the uniform norm and in the time variable are given in \cref{section_main}.
    The Lipschitz continuity results involving the special norm of the space of continuous functions are obtained in \cref{section_last}, where also the infinitesimal criterion for the minimax solution is presented.

\section{Path-dependent Hamilton--Jacobi equation}
\label{section_1}

    Let numbers $n \in \mathbb{N}$, $T > 0$, and $h \geq 0$ be fixed.
    Consider the Banach space $C([- h, T], \mathbb{R}^n)$ of continuous functions $x \colon [- h, T] \to \mathbb{R}^n$ with the uniform norm $\|x(\cdot)\|_\infty \doteq \max_{\tau \in [- h, T]} \|x(\tau)\|$, where $\|\cdot\|$ stands for the Euclidean norm in $\mathbb{R}^n$.
    Given $t \in [0, T]$ and $x(\cdot) \in C([- h, T], \mathbb{R}^n)$, define a function $x(\cdot \wedge t) \in C([- h, T], \mathbb{R}^n)$ by $x(\tau \wedge t) \doteq x(\tau)$ for all $\tau \in [- h, t]$ and $x(\tau \wedge t) \doteq x(t)$ for all $\tau \in (t, T]$.

    A functional $\varphi \colon [0, T] \times C([- h, T], \mathbb{R}^n) \to \mathbb{R}$ is called {\it non-anticipative} if, for any $t \in [0, T)$ and $x(\cdot)$, $y(\cdot) \in C([- h, T], \mathbb{R}^n)$ such that $x(\cdot \wedge t) = y(\cdot \wedge t)$, the equality $\varphi(t, x(\cdot)) = \varphi(t, y(\cdot))$ is valid.

    For every point $(t, x(\cdot)) \in [0, T] \times C([- h, T], \mathbb{R}^n)$, consider the set $\Lip(t, x(\cdot))$ of functions $y(\cdot) \in C([- h, T], \mathbb{R}^n)$ that satisfy the equality $y(\cdot \wedge t) = x(\cdot \wedge t)$ and are Lipschitz continuous on $[t, T]$.
    According to \cite{Kim_1999,Lukoyanov_2003_1} (see also, e.g., \cite[Section 3.1]{Gomoyunov_Lukoyanov_RMS_2024}), a functional $\varphi \colon [0, T] \times C([- h, T], \mathbb{R}^n) \to \mathbb{R}$ is called {\it coinvariantly differentiable} ($ci$-{\it differentiable}) at a point $(t, x(\cdot)) \in [0, T) \times C([- h, T], \mathbb{R}^n)$ if there exist $\partial_t \varphi(t, x(\cdot)) \in \mathbb{R}$ and $\nabla \varphi(t, x(\cdot)) \in \mathbb{R}^n$ such that, for any $y(\cdot) \in \Lip(t, x(\cdot))$,
    \begin{displaymath}
        \frac{1}{\tau - t}
        \biggl(\varphi(\tau, y(\cdot)) - \varphi(t, x(\cdot))
        - \partial_t \varphi(t, x(\cdot)) (\tau - t)
        - \langle \nabla \varphi(t, x(\cdot)), y(\tau) - x(t) \rangle \biggr)
        \to 0
    \end{displaymath}
    as $\tau \to t^+$.
    Here, $\langle \cdot, \cdot \rangle$ stands for the inner product in $\mathbb{R}^n$.
    In this case, $\partial_t \varphi(t, x(\cdot))$ and $\nabla \varphi(t, x(\cdot))$ are called {\it $ci$-derivatives} of $\varphi$ at $(t, x(\cdot))$.
    Moreover, $\varphi$ is called {\it $ci$-smooth} if it is continuous, $ci$-differentiable at every point $(t, x(\cdot)) \in [0, T) \times C([- h, T], \mathbb{R}^n)$, and the $ci$-derivatives $\partial_t \varphi \colon [0, T) \times C([- h, T], \mathbb{R}^n) \to \mathbb{R}$, $\nabla \varphi \colon [0, T) \times C([- h, T], \mathbb{R}^n) \to \mathbb{R}^n$ are continuous.
    Note that, in this case, the mappings $\varphi$, $\partial_t \varphi$, and $\nabla \varphi$ are automatically non-anticipative \cite[p. 256]{Gomoyunov_Lukoyanov_RMS_2024}.

    Suppose that a mapping $H \colon [0, T] \times C([- h, T], \mathbb{R}^n) \times \mathbb{R}^n \to \mathbb{R}$, called the {\it Hamiltonian}, and a functional $\sigma \colon C([- h, T], \mathbb{R}^n) \to \mathbb{R}$, called the {\it boundary functional}, are given and satisfy the basic assumption below.
    \begin{assumption} \label{assumption_basic}
        The following conditions are satisfied.

        \smallskip

        \noindent (i)
        The mapping $H$ is continuous.

        \smallskip

        \noindent (ii)
        There exists $c_H > 0$ such that
        \begin{displaymath}
            |H(t, x(\cdot), s_1) - H(t, x(\cdot), s_2)|
            \leq c_H (1 + \|x(\cdot \wedge t)\|_\infty) \|s_1 - s_2\|
        \end{displaymath}
        for all $(t, x(\cdot)) \in [0, T] \times C([- h, T], \mathbb{R}^n)$ and $s_1$, $s_2 \in \mathbb{R}^n$.

        \smallskip

        \noindent (iii)
        For any compact set $D \subset C([- h, T], \mathbb{R}^n)$, there exists $\lambda_H \doteq \lambda_H(D) > 0$ such that
        \begin{equation} \label{H_lip}
            |H(t, x_1(\cdot), s) - H(t, x_2(\cdot), s)|
            \leq \lambda_H (1 + \|s\|)
            \|x_1(\cdot \wedge t) - x_2(\cdot \wedge t)\|_\infty
        \end{equation}
        for all $t \in [0, T]$, $x_1(\cdot)$, $x_2(\cdot) \in D$, and $s \in \mathbb{R}^n$.

        \smallskip

        \noindent (iv)
        The functional $\sigma$ is continuous.
    \end{assumption}

    Observe that condition (iii) implies that, for every fixed $s \in \mathbb{R}^n$, the functional
    $[0, T] \times C([-h, T], \mathbb{R}^n) \ni (t, x(\cdot)) \mapsto H(t, x(\cdot), s)$ is non-anticipative.

    Consider the {\it path-dependent Hamilton--Jacobi equation} with $ci$-derivatives
    \begin{subequations} \label{Cauchy_problem}
    \begin{equation} \label{HJ}
        \partial_t \varphi(t, x(\cdot)) + H \bigl( t, x(\cdot), \nabla \varphi(t, x(\cdot)) \bigr)
        = 0
        \quad \forall (t, x(\cdot)) \in [0, T) \times C([- h, T], \mathbb{R}^n)
    \end{equation}
    and the {\it Cauchy problem} for this equation and the right-end {\it boundary condition}
    \begin{equation} \label{boundary_condition}
        \varphi(T, x(\cdot))
        = \sigma(x(\cdot))
        \quad \forall x(\cdot) \in C([- h, T], \mathbb{R}^n).
    \end{equation}
    \end{subequations}
    The unknown here is a non-anticipative functional $\varphi \colon [0, T] \times C([- h, T], \mathbb{R}^n) \to \mathbb{R}$.

    For every point $(t, x(\cdot)) \in [0, T] \times C([- h, T], \mathbb{R}^n)$, consider the set $Y(t, x(\cdot))$ of functions $y(\cdot) \in \Lip(t, x(\cdot))$ that satisfy the inequality $\|\dot{y}(\tau)\| \leq c_H (1 + \|y(\cdot \wedge \tau)\|_\infty)$ for almost every (a.e.) $\tau \in [t, T]$.
    Here, the number $c_H$ is taken from \cref{assumption_basic}, (ii), and $\dot{y}(\tau) \doteq \rd y(\tau) / \rd \tau$.
    Note that the set $Y(t, x(\cdot))$ is compact in $C([- h, T], \mathbb{R}^n)$ (see, e.g., \cite[Proposition 4.1]{Lukoyanov_2003_1}).

    In accordance with \cite[Definition 5.1]{Lukoyanov_2003_1} (see also, e.g., \cite[Section 3.3]{Gomoyunov_Lukoyanov_RMS_2024}), a functional $\varphi \colon [0, T] \times C([- h, T], \mathbb{R}^n) \to \mathbb{R}$ is called a {\it minimax solution} of the Cauchy problem \cref{Cauchy_problem} if it is non-anticipative and continuous, satisfies the boundary condition \cref{boundary_condition}, and has the following property:
    for any $(t, x(\cdot)) \in [0, T] \times C([-h, T], \mathbb{R}^n)$ and $s \in \mathbb{R}^n$, there exists $y(\cdot) \in Y(t, x(\cdot))$ such that
    \begin{displaymath}
        \varphi(\tau, y(\cdot))
        = \varphi(t, x(\cdot))
        + \int_{t}^{\tau} \bigl( \langle s, \dot{y}(\xi) \rangle - H(\xi, y(\cdot), s) \bigr) \, \rd \xi
        \quad \forall \tau \in [t, T].
    \end{displaymath}
    Under \cref{assumption_basic}, a minimax solution $\varphi$ exists and is unique \cite[Theorem 1]{Gomoyunov_Lukoyanov_Plaksin_2021}.

    The goal of the present paper is to study Lipschitz continuity properties of $\varphi$ under some additional assumptions on the Hamiltonian $H$ and/or the boundary functional $\sigma$.
    In the next section, the results on local and global Lipschitz continuity properties of $\varphi$ in the (second) functional variable $x(\cdot)$ with respect to the uniform norm $\|\cdot\|_\infty$ are given.
    Moreover, a result on Lipschitz continuity property of $\varphi$ in the (first) time variable $t$ is also obtained.

\section{Uniform norm Lipschitz continuity results}
\label{section_main}

    Consider the following additional assumption on the boundary functional $\sigma$ from \cref{boundary_condition}.
    \begin{assumption} \label{assumption_sigma_lip}
        For any compact set $D \subset C([- h, T], \mathbb{R}^n)$, there exists a number $\lambda_\sigma \doteq \lambda_\sigma(D) > 0$ such that, for any $x_1(\cdot)$, $x_2(\cdot) \in D$,
        \begin{equation} \label{sigma_lip}
            |\sigma(x_1(\cdot)) - \sigma(x_2(\cdot))|
            \leq \lambda_\sigma
            \|x_1(\cdot) - x_2(\cdot)\|_\infty.
        \end{equation}
    \end{assumption}

    Note that \cref{assumption_sigma_lip} implies condition (iv) from \cref{assumption_basic}.

    \begin{theorem} \label{theorem_main}
        Let conditions {\rm (i)}--{\rm (iii)} from \cref{assumption_basic} and \cref{assumption_sigma_lip} hold.
        Then, the minimax solution $\varphi$ of the Cauchy problem \cref{Cauchy_problem} satisfies the following local Lipschitz condition with respect to the functional variable $x(\cdot)$: for any compact set $D \subset C([- h, T], \mathbb{R}^n)$, there exists $\lambda_\varphi \doteq \lambda_\varphi(D) > 0$ such that
        \begin{equation} \label{varphi_lip}
            |\varphi(t, x_1(\cdot)) - \varphi(t, x_2(\cdot))|
            \leq \lambda_\varphi
            \|x_1(\cdot \wedge t) - x_2(\cdot \wedge t)\|_\infty
        \end{equation}
        for all $t \in [0, T]$ and $x_1(\cdot)$, $x_2(\cdot) \in D$.
    \end{theorem}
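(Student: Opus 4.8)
The plan is to exploit the defining property of the minimax solution at two nearby initial paths and compare the resulting trajectories using a coinvariantly smooth Lyapunov--Krasovskii functional, in the spirit of the comparison principle. Fix a compact set $D$ and $t \in [0, T)$ (the case $t = T$ being the boundary condition), and fix $x_1(\cdot)$, $x_2(\cdot) \in D$. The trajectories $y(\cdot) \in Y(t, x_i(\cdot))$ supplied by the minimax property all live in a common compact set $Y_D \subset C([-h, T], \mathbb{R}^n)$ determined by $D$ and $c_H$; on $Y_D$ we have the Lipschitz constants $\lambda_H = \lambda_H(Y_D)$ and $\lambda_\sigma = \lambda_\sigma(Y_D)$ at our disposal. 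The quantity to estimate is $|\varphi(t, x_1(\cdot)) - \varphi(t, x_2(\cdot))|$; by symmetry it suffices to bound $\varphi(t, x_1(\cdot)) - \varphi(t, x_2(\cdot))$ from above.

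First I would choose, for an appropriate constant vector $s = s(x_1, x_2) \in \mathbb{R}^n$ to be fixed later, a trajectory $y_1(\cdot) \in Y(t, x_1(\cdot))$ realizing the ``$u$-property'' (the equality displayed in the definition of minimax solution) with this $s$, so that $\tau \mapsto \varphi(\tau, y_1(\cdot)) - \langle s, y_1(\tau)\rangle + \int_t^\tau H(\xi, y_1(\cdot), s)\,\rd\xi$ is constant. Then, running backward from $\tau = T$ where $\varphi(T, y_1(\cdot)) = \sigma(y_1(\cdot))$, express $\varphi(t, x_1(\cdot))$ in terms of $\sigma(y_1(\cdot))$ and an integral of $H$ along $y_1(\cdot)$. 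The symmetric choice would use the other half of the minimax definition to obtain, for any admissible trajectory $y_2(\cdot) \in Y(t, x_2(\cdot))$, the estimate $\varphi(t, x_2(\cdot)) \ge \sigma(y_2(\cdot)) + \int_t^T(\langle s, \dot y_2\rangle - H(\xi, y_2(\cdot), s))\,\rd\xi$ — but since the definition as stated gives only an equality for \emph{some} $y(\cdot)$, the cleaner route is to fix the single trajectory $y_1(\cdot)$ from the $x_1$-side, then build a \emph{companion} trajectory $y_2(\cdot)$ started from $x_2(\cdot)$ that shadows $y_1(\cdot)$, and compare $\varphi$ along the two paths directly. The natural companion is the solution of $\dot y_2(\tau) = \dot y_1(\tau)$ on $[t, T]$ with $y_2(\cdot\wedge t) = x_2(\cdot\wedge t)$; then $y_2(\tau) - y_1(\tau) = x_2(t) - x_1(t)$ is constant on $[t, T]$, so $\|y_1(\cdot) - y_2(\cdot)\|_\infty \le \|x_1(\cdot\wedge t) - x_2(\cdot\wedge t)\|_\infty$, and $y_2(\cdot) \in Y(t, x_2(\cdot))$ up to enlarging the constant (which only enlarges $Y_D$).

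The key computation then splits $\varphi(t, x_1(\cdot)) - \varphi(t, x_2(\cdot))$ into three pieces by interpolating through $\varphi(T, y_1(\cdot))$ and $\varphi(T, y_2(\cdot))$: (a) the difference $\varphi(t, x_1(\cdot)) - \varphi(T, y_1(\cdot))$, which by the minimax equality equals $\int_t^T(H(\xi, y_1(\cdot), s) - \langle s, \dot y_1(\xi)\rangle)\,\rd\xi$; (b) the boundary difference $\sigma(y_1(\cdot)) - \sigma(y_2(\cdot))$, bounded by $\lambda_\sigma \|y_1 - y_2\|_\infty \le \lambda_\sigma \|x_1(\cdot\wedge t) - x_2(\cdot\wedge t)\|_\infty$ via Assumption~\ref{assumption_sigma_lip}; and (c) the difference $\varphi(T, y_2(\cdot)) - \varphi(t, x_2(\cdot))$, for which one uses the \emph{other} direction of the minimax property — namely, along \emph{any} trajectory in $Y(t, x_2(\cdot))$ the value $\varphi$ cannot increase faster than the corresponding integral allows; more precisely, one invokes that the minimax solution is, in particular, a lower (or upper) solution, which yields $\varphi(t, x_2(\cdot)) \ge \varphi(T, y_2(\cdot)) + \int_t^T(\langle s, \dot y_2(\xi)\rangle - H(\xi, y_2(\cdot), s))\,\rd\xi$ for this particular $y_2(\cdot)$. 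Adding (a)+(b)+(c) and using $\dot y_1 = \dot y_2$, the $\langle s, \dot y\rangle$ terms cancel and one is left with
\begin{displaymath}
    \varphi(t, x_1(\cdot)) - \varphi(t, x_2(\cdot))
    \le \lambda_\sigma \|x_1(\cdot\wedge t) - x_2(\cdot\wedge t)\|_\infty
    + \int_t^T \bigl( H(\xi, y_1(\cdot), s) - H(\xi, y_2(\cdot), s) \bigr)\,\rd\xi.
\end{displaymath}
Then condition~(iii) of Assumption~\ref{assumption_basic} applied on the compact set $Y_D$ bounds the integrand by $\lambda_H (1 + \|s\|)\|y_1(\cdot\wedge\xi) - y_2(\cdot\wedge\xi)\|_\infty \le \lambda_H(1 + \|s\|)\|x_1(\cdot\wedge t) - x_2(\cdot\wedge t)\|_\infty$, so the integral is at most $(T - t)\lambda_H(1 + \|s\|)\|x_1(\cdot\wedge t) - x_2(\cdot\wedge t)\|_\infty$.

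Finally, choosing $s = 0$ (or any fixed bounded $s$) collapses the dependence on $s$, and one obtains \eqref{varphi_lip} with $\lambda_\varphi \doteq \lambda_\sigma(Y_D) + T\,\lambda_H(Y_D)$; the symmetric inequality follows by swapping the roles of $x_1$ and $x_2$. I expect the main obstacle to be the bookkeeping in step~(c): extracting the correct one-sided inequality from the minimax-solution definition as stated (which is phrased as an existence-of-an-equality rather than as a pair of sub/supersolution inequalities) and making sure the companion trajectory $y_2(\cdot)$ is genuinely admissible, i.e.\ lies in $Y(t, x_2(\cdot))$ — this requires either a slightly larger constant in the velocity bound (harmless, as it only enlarges the compact set over which $\lambda_H$ is taken) or a more careful construction of $y_2(\cdot)$ as the minimax trajectory from $x_2(\cdot)$ for the same $s$ together with a Gronwall-type estimate $\|y_1(\cdot) - y_2(\cdot)\|_\infty \le C_D \|x_1(\cdot\wedge t) - x_2(\cdot\wedge t)\|_\infty$ controlling their separation. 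Either way, once both trajectories are confined to one compact set depending only on $D$, the three Lipschitz estimates combine linearly and no Gronwall iteration in $t$ is needed.
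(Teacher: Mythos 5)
There is a genuine gap, and it sits exactly where you suspected: step~(c). The definition of a minimax solution used in the paper is an existence-of-an-equality statement: for each $(t, x(\cdot))$ and each $s$ there exists \emph{some} $y(\cdot) \in Y(t, x(\cdot))$ along which the integral identity holds. It gives no one-sided inequality along a trajectory that \emph{you} choose, and neither do the equivalent upper/lower (stability) characterizations, which are likewise ``there exists a trajectory'' statements. So the estimate $\varphi(t, x_2(\cdot)) \geq \varphi(T, y_2(\cdot)) + \int_t^T (\langle s, \dot{y}_2 \rangle - H(\xi, y_2(\cdot), s))\,\rd\xi$ for your \emph{companion} trajectory $\dot{y}_2 = \dot{y}_1$ is simply not available. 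The fallback you mention --- taking $y_2(\cdot)$ to be the trajectory supplied by the minimax property at $(t, x_2(\cdot))$ for the same $s$ --- restores the identity but destroys the cancellation $\dot{y}_1 = \dot{y}_2$, and there is no Gronwall bound $\|y_1(\cdot) - y_2(\cdot)\|_\infty \leq C_D \|x_1(\cdot \wedge t) - x_2(\cdot \wedge t)\|_\infty$: the two trajectories are produced by an abstract existence statement, are not solutions of any common Lipschitz ODE, and may separate at rate $c_H(1 + \cdot)$ regardless of how close $x_1(\cdot)$ and $x_2(\cdot)$ are. Consequently neither $\sigma(y_1(\cdot)) - \sigma(y_2(\cdot))$ nor $\int_t^T (H(\xi, y_1(\cdot), s) - H(\xi, y_2(\cdot), s))\,\rd\xi$ can be bounded by $\|x_1(\cdot \wedge t) - x_2(\cdot \wedge t)\|_\infty$, and the proposed constant $\lambda_\varphi = \lambda_\sigma + T \lambda_H$ (with $s = 0$) does not follow.

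This is precisely the difficulty the paper's proof is built to overcome. There, the direction $s$ is not fixed once but chosen adaptively as $s = \nabla \nu(\vartheta, y_1(\cdot) - y_2(\cdot))$, where $\nu = a(\cdot)\sqrt{\varepsilon + \gamma}$ is a $ci$-smooth Lyapunov--Krasovskii functional; a maximal-time (contradiction) argument over solutions of the differential inclusion \cref{F_differential_inclusion} produces a \emph{pair} of trajectories $(y_1^\circ, y_2^\circ)$ together with a scalar $z^\circ$ satisfying $z^\circ(T) \geq \varphi(T, y_2^\circ(\cdot)) - \varphi(T, y_1^\circ(\cdot))$; and the choice of $s$ makes $\langle \nabla\nu, \dot{y}_1^\circ - \dot{y}_2^\circ\rangle + \dot{z}^\circ$ reduce to the Hamiltonian difference, which the Lipschitz condition \cref{H_lip} controls by $\gamma$ itself. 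Closing this estimate requires the ODE \cref{C_differential_equation} for $a(\cdot)$, i.e.\ a Gronwall-type exponential factor $e^{\lambda_H T/\varkappa}$ in $\lambda_\varphi$ --- contrary to your concluding claim that no Gronwall iteration in $t$ is needed. To repair your argument you would need either this Lyapunov-functional machinery or a genuinely different mechanism replacing the unavailable inequality in step~(c).
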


    In general, the proof of \cref{theorem_main} follows the scheme of the proof of the so-called comparison principle for (lower and upper) minimax solutions of path-dependent Hamilton--Jacobi equations and uses the technique of $ci$-smooth Lyapunov--Krasovskii functionals \cite[Lemma 7.7]{Lukoyanov_2003_1} (see also \cite[Lemma 1]{Gomoyunov_Lukoyanov_RMS_2024}).
    In addition, it utilizes some arguments proposed in the theory of viscosity solutions of (usual) Hamilton--Jacobi equations with first-order partial derivatives when Lipschitz continuity properties of such solutions are investigated (see, e.g., \cite[Theorem 8.1]{Barles_2013}).

    \begin{proof}[Proof of \cref{theorem_main}]
    We split the proof into several steps.

    1.
        Let a compact set $D \subset C([- h, T], \mathbb{R}^n)$ be fixed.
        Consider the set $K \doteq K(D)$ consisting of functions $y(\cdot) \in C([- h, T], \mathbb{R}^n)$ for each of which there exist $t \in [0, T]$ and $x(\cdot) \in D$ such that $y(\cdot) \in Y(t, x(\cdot))$.
        Based on, e.g., \cite[Proposition 4.2]{Lukoyanov_2003_1}, one can show that $K$ is compact.
        Choose $\lambda_H \doteq \lambda_H(K)$ and $\lambda_\sigma \doteq \lambda_\sigma(K)$ by \cref{assumption_basic}, (iii), and \cref{assumption_sigma_lip}, respectively.
        Following \cite{Zhou_2020_1}, consider the functional
        \begin{displaymath}
            \gamma (t, x(\cdot))
            \doteq \begin{cases}
                \displaystyle
                \frac{(\|x(\cdot \wedge t)\|_\infty^2 - \|x(t)\|^2)^2}{\|x(\cdot \wedge t)\|_\infty^2} + \|x(t)\|^2
                & \mbox{if } \|x(\cdot \wedge t)\|_\infty > 0, \\
                0 & \mbox{if } \|x(\cdot \wedge t)\|_\infty = 0
              \end{cases}
        \end{displaymath}
        for all $(t, x(\cdot)) \in [0, T] \times C([- h, T], \mathbb{R}^n)$.
        According to \cite[Lemma 2.3]{Zhou_2020_1}, we have
        \begin{equation} \label{gamma_bounds}
            \varkappa \|x(\cdot \wedge t)\|_\infty^2
            \leq \gamma (t, x(\cdot))
            \leq 2 \|x(\cdot \wedge t)\|_\infty^2
        \end{equation}
        for all $(t, x(\cdot)) \in [0, T] \times C([- h, T], \mathbb{R}^n)$ with $\varkappa \doteq (3 - \sqrt{5}) / 2$.
        Moreover (see also \cite[Section 4.1]{Gomoyunov_Lukoyanov_Plaksin_2021}), the functional $\gamma$ is $ci$-smooth and its $ci$-derivatives satisfy the relations
        \begin{equation} \label{nabla_gamma_bound}
            \partial_t \gamma(t, x(\cdot))
            = 0,
            \quad \|\nabla \gamma(t, x(\cdot))\|
            \leq 2 \|x(t)\|
        \end{equation}
        for all $(t, x(\cdot)) \in [0, T) \times C([- h, T], \mathbb{R}^n)$.
        Further, consider the function
        \begin{displaymath}
           a(t)
           \doteq \sqrt{\varkappa} (e^{\lambda_H  (T - t) / \varkappa} - 1)
           + \lambda_\sigma e^{\lambda_H  (T - t) / \varkappa} / \sqrt{\varkappa}
           \quad \forall t \in \mathbb{R}.
        \end{displaymath}
        Note that $a(t) > 0$ for all $t \in [0, T]$, $a(T) = \lambda_\sigma / \sqrt{\varkappa}$, and
        \begin{equation} \label{C_differential_equation}
            \dot{a}(t) + \lambda_H a(t) / \varkappa + \lambda_H / \sqrt{\varkappa}
            = 0
            \quad \forall t \in \mathbb{R}.
        \end{equation}
        Put $\lambda_\varphi \doteq \lambda_\varphi(D) \doteq \sqrt{2} a(0)$.
        Due to \cref{gamma_bounds} and since $a(t) \leq a(0)$ for all $t \in [0, T]$, we conclude that, in order to complete the proof, it remains to show that
        \begin{equation} \label{basic}
            \varphi(t, x_1(\cdot)) - \varphi(t, x_2(\cdot))
            \leq a(t) \sqrt{\gamma(t, x_1(\cdot) - x_2(\cdot))}
        \end{equation}
        for all $t \in [0, T]$ and $x_1(\cdot)$, $x_2(\cdot) \in D$.

    2.
        Fix $\varepsilon > 0$ and consider the functional
        \begin{equation} \label{nu}
            \nu(t, x(\cdot))
            \doteq a(t) \sqrt{\varepsilon + \gamma(t, x(\cdot))}
        \end{equation}
        for all $(t, x(\cdot)) \in [0, T] \times C([- h, T], \mathbb{R}^n)$.
        Note that the functional $\nu$ is $ci$-smooth and, in accordance with \cref{nabla_gamma_bound},
        \begin{equation} \label{nu_derivatives}
            \partial_t \nu(t, x(\cdot))
            = \dot{a}(t) \sqrt{\varepsilon + \gamma(t, x(\cdot))},
            \quad \nabla \nu(t, x(\cdot))
            = \frac{a(t) \nabla \gamma(t, x(\cdot))}{2 \sqrt{\varepsilon + \gamma(t, x(\cdot))}}
        \end{equation}
        for all $(t, x(\cdot)) \in [0, T) \times C([- h, T], \mathbb{R}^n)$.
        In addition, using \cref{gamma_bounds,nabla_gamma_bound}, we derive
        \begin{equation} \label{nu_derivatives_estimate}
            \|\nabla \nu(t, x(\cdot))\|
            \leq a(t) / \sqrt{\varkappa}
            \quad \forall (t, x(\cdot)) \in [0, T) \times C([- h, T], \mathbb{R}^n).
        \end{equation}

        Consider the multivalued mapping
        \begin{displaymath}
            [0, T) \times C([- h, T], \mathbb{R}^n) \times C([- h, T], \mathbb{R}^n) \ni (\tau, y_1(\cdot), y_2(\cdot)) \mapsto F(\tau, y_1(\cdot), y_2(\cdot))
        \end{displaymath}
        such that the set $F(\tau, y_1(\cdot), y_2(\cdot))$ consists of triples $(f_1, f_2, \chi) \in \mathbb{R}^n \times \mathbb{R}^n \times \mathbb{R}$ with
        \begin{equation} \label{F_1}
            \|f_1\|
            \leq c_H (1 + \|y_1(\cdot \wedge \tau)\|_\infty),
            \quad \|f_2\|
            \leq c_H (1 + \|y_2(\cdot \wedge \tau)\|_\infty),
        \end{equation}
        where $c_H$ is taken from \cref{assumption_basic}, (ii), and
        \begin{equation} \label{F_2}
            \begin{aligned}
                & \bigl| \chi - \langle \nabla \nu(\tau, y_1(\cdot) - y_2(\cdot)), f_2 - f_1 \rangle \\
                & \quad - H \bigl( \tau, y_1(\cdot), \nabla \nu(\tau, y_1(\cdot) - y_2(\cdot)) \bigr)
                + H \bigl( \tau, y_2(\cdot), \nabla \nu(\tau, y_1(\cdot) - y_2(\cdot)) \bigr)
                \bigr|
                \leq \varepsilon.
            \end{aligned}
        \end{equation}
        Observe that the mapping $F$ has nonempty compact convex values, is non-anticipative and Hausdorff continuous since $H$ and $\nabla \nu$ are non-anticipative and continuous.

        Now, let $t \in [0, T]$ and $x_1(\cdot)$, $x_2(\cdot) \in D$ be fixed.
        Denote by $S$ the set of triples $(y_1(\cdot), y_2(\cdot), z(\cdot)) \in C([- h, T], \mathbb{R}^n) \times C([- h, T], \mathbb{R}^n) \times C([- h, T], \mathbb{R})$ such that $y_1(\cdot)$, $y_2(\cdot)$, and $z(\cdot)$ satisfy a Lipschitz condition on $[t, T]$, the differential inclusion
        \begin{equation} \label{F_differential_inclusion}
            (\dot{y}_1(\tau), \dot{y}_2(\tau), \dot{z}(\tau))
            \in F(\tau, y_1(\cdot), y_2(\cdot))
        \end{equation}
        for a.e. $\tau \in [t, T]$, and the initial conditions
        \begin{equation} \label{F_initial_condition}
            y_1(\tau)
            = x_1(\tau),
            \quad y_2(\tau)
            = x_2(\tau),
            \quad z(\tau)
            = \varphi(t, x_2(\cdot)) - \varphi(t, x_1(\cdot))
        \end{equation}
        for all $\tau \in [- h, t]$.
        Owing to the properties of $F$ described above and the growth conditions \cref{F_1}, the set $S$ is nonempty and compact in $C([- h, T], \mathbb{R}^n) \times C([- h, T], \mathbb{R}^n) \times C([- h, T], \mathbb{R})$ (in this connection, see, e.g., \cite[Proposition 4.1]{Lukoyanov_2003_1}).

    3.
        Let us prove that there exists a triple $(y_1^\circ(\cdot), y_2^\circ(\cdot), z^\circ(\cdot)) \in S$ such that
        \begin{equation} \label{z^circ}
            z^\circ(T)
            \geq \varphi(T, y_2^\circ(\cdot)) - \varphi(T, y_1^\circ(\cdot)).
        \end{equation}
        For every $\tau \in [t, T]$, consider the set
        \begin{displaymath}
            M(\tau)
            \doteq \bigl\{ (y_1(\cdot), y_2(\cdot), z(\cdot)) \in S \colon
            z(\tau)
            \geq \varphi(\tau, y_2(\cdot)) - \varphi(\tau, y_1(\cdot)) \bigr\}.
        \end{displaymath}
        Note that $M(t) \neq \emptyset$ owing to \cref{F_initial_condition} and because $\varphi$ is non-anticipative.
        Put
        \begin{displaymath}
            \vartheta
            \doteq \max \bigl\{ \tau \in [t, T] \colon
            M(\tau) \neq \emptyset \bigr\},
        \end{displaymath}
        where the maximum is attained since $S$ is compact and $\varphi$ is continuous.
        So, we need to show that $\vartheta = T$.
        Arguing by contradiction, suppose that $\vartheta < T$.
        Choose arbitrarily
        \begin{equation} \label{proof_lemma_y_1_y_2_z}
            (y_1(\cdot), y_2(\cdot), z(\cdot)) \in M(\vartheta).
        \end{equation}

        Take $s \doteq \nabla \nu(\vartheta, y_1(\cdot) - y_2(\cdot))$ and, using the fact that $\varphi$ is the minimax solution, find $y_1^\ast(\cdot) \in Y(\vartheta, y_1(\cdot))$ and $y_2^\ast(\cdot) \in Y(\vartheta, y_2(\cdot))$ such that
        \begin{equation} \label{proof_lemma_y^+}
            \varphi(\tau, y_2^\ast(\cdot)) - \varphi(\tau, y_1^\ast(\cdot))
            = \varphi(\vartheta, y_2(\cdot)) - \varphi(\vartheta, y_1(\cdot))
            + z^\ast(\tau)
            \quad \forall \tau \in [\vartheta, T],
        \end{equation}
        where we denote
        \begin{displaymath}
            z^\ast(\tau)
            \doteq \int_{\vartheta}^{\tau} \bigl( \langle s, \dot{y}_2^\ast(\xi) - \dot{y}_1^\ast(\xi) \rangle + H(\xi, y_1^\ast(\cdot), s) - H(\xi, y_2^\ast(\cdot), s) \bigr) \, \rd \xi.
        \end{displaymath}
        Then, due to continuity of $H$ and $\nabla \nu$, there exists $\delta \in (0, T - \vartheta)$ such that
        \begin{displaymath}
            \begin{aligned}
                & \bigl| \dot{z}^\ast(\tau) - \langle \nabla \nu(\tau, y_1^\ast(\cdot) - y_2^\ast(\cdot)), \dot{y}_2^\ast(\tau) - \dot{y}_1^\ast(\tau) \rangle \\
                & \quad - H \bigl( \tau, y_1^\ast(\cdot), \nabla \nu(\tau, y_1^\ast(\cdot) - y_2^\ast(\cdot)) \bigr)
                + H \bigl( \tau, y_2^\ast(\cdot), \nabla \nu(\tau, y_1^\ast(\cdot) - y_2^\ast(\cdot)) \bigr) \bigr|
                \leq \varepsilon
            \end{aligned}
        \end{displaymath}
        for a.e. $\tau \in [\vartheta, \vartheta + \delta]$.
        Hence, in accordance with \cref{F_1,F_2}, we conclude that
        \begin{equation} \label{proof_lemma_inclusion}
            (\dot{y}_1^\ast(\tau), \dot{y}_2^\ast(\tau), \dot{z}^\ast(\tau))
            \in F(\tau, y_1^\ast(\cdot), y_2^\ast(\cdot))
        \end{equation}
        for a.e. $\tau \in [\vartheta, \vartheta + \delta]$.

        Consider the functions $\bar{y}^\ast_1 (\cdot) \doteq y_1^\ast(\cdot \wedge \vartheta + \delta)$ and $\bar{y}^\ast_2(\cdot) \doteq y_2^\ast(\cdot \wedge \vartheta + \delta)$, and define $\bar{z}^\ast(\tau) \doteq z(\tau)$ if $\tau \in [- h, \vartheta]$, $\bar{z}^\ast(\tau) \doteq z(\vartheta) + z^\ast(\tau)$ if $\tau \in (\vartheta, \vartheta + \delta]$, and
        \begin{displaymath}
            \begin{aligned}
                 & \bar{z}^\ast(\tau)
                 \doteq z(\vartheta) + z^\ast(\vartheta + \delta) \\
                 & \quad + \int_{\vartheta + \delta}^{\tau} \Bigl( H \bigl( \xi, \bar{y}^\ast_1(\xi), \nabla \nu(\xi, \bar{y}^\ast_1(\cdot) - \bar{y}^\ast_2(\cdot)) \bigr) - H \bigl( \xi, \bar{y}^\ast_2(\xi), \nabla \nu(\xi, \bar{y}^\ast_1(\cdot) - \bar{y}^\ast_2(\cdot)) \bigr) \Bigr) \, \rd \xi
            \end{aligned}
        \end{displaymath}
        if $\tau \in (\vartheta + \delta, T]$.
        In view of \cref{proof_lemma_y_1_y_2_z,proof_lemma_inclusion}, we obtain $(\bar{y}^\ast_1(\cdot), \bar{y}^\ast_2(\cdot), \bar{z}^\ast(\cdot)) \in S$.
        Moreover, from \cref{proof_lemma_y_1_y_2_z,proof_lemma_y^+}, and recalling that $\varphi$ is non-anticipative, we derive
        \begin{displaymath}
            \begin{aligned}
                \bar{z}^\ast(\vartheta + \delta)
                & \geq \varphi(\vartheta, y_2(\cdot)) - \varphi(\vartheta, y_1(\cdot)) + z^\ast(\vartheta + \delta) \\
                & = \varphi(\vartheta + \delta, y_2^\ast(\cdot)) - \varphi(\vartheta + \delta, y_1^\ast(\cdot)) \\
                & = \varphi(\vartheta + \delta, \bar{y}^\ast_2(\cdot)) - \varphi(\vartheta + \delta, \bar{y}^\ast_1(\cdot)).
            \end{aligned}
        \end{displaymath}
        Thus, the inclusion $(\bar{y}^\ast_1(\cdot), \bar{y}^\ast_2(\cdot), \bar{z}^\ast(\cdot)) \in M(\vartheta + \delta)$ takes place, which contradicts the definition of $\vartheta$.

    4.
        By the functions $(y_1^\circ(\cdot), y_2^\circ(\cdot), z^\circ(\cdot))$ from step 3, define the function
        \begin{equation} \label{mu}
            \mu(\tau)
            \doteq \nu(\tau, \Delta y^\circ(\cdot))
            + z^\circ(\tau) - \varepsilon (\tau - t)
            \quad \forall \tau \in [t, T],
        \end{equation}
        where $\Delta y^\circ(\cdot) \doteq y_1^\circ(\cdot) - y_2^\circ(\cdot)$.
        Based on the fact that $\nu$ is $ci$-smooth and then applying \cite[Proposition 1]{Gomoyunov_Lukoyanov_RMS_2024}, we find that $\mu(\cdot)$ is continuous, satisfies a Lipschitz condition on $[t, \vartheta]$ for every $\vartheta \in (t, T)$, and, for a.e. $\tau \in [t, T]$,
        \begin{displaymath}
            \dot{\mu}(\tau)
            = \partial_t \nu(\tau, \Delta y^\circ(\cdot))
            + \langle \nabla \nu(\tau, \Delta y^\circ(\cdot)), \dot{y}_1^\circ(\tau) - \dot{y}_2^\circ(\tau) \rangle + \dot{z}^\circ(\tau) - \varepsilon.
        \end{displaymath}
        Consequently, owing to the inclusion $(y_1^\circ(\cdot), y_2^\circ(\cdot), z^\circ(\cdot)) \in S$ and \cref{F_2}, we derive
        \begin{equation} \label{dot_mu_estimate}
            \begin{aligned}
                \dot{\mu}(\tau)
                & \leq \partial_t \nu(\tau, \Delta y^\circ(\cdot)) \\
                & \quad + H \bigl( \tau, y_1^\circ(\cdot), \nabla \nu(\tau, \Delta y^\circ(\cdot)) \bigr)
                - H \bigl( \tau, y_2^\circ(\cdot), \nabla \nu(\tau, \Delta y^\circ(\cdot)) \bigr)
            \end{aligned}
        \end{equation}
        for a.e. $\tau \in [t, T]$.
        So, by \cref{gamma_bounds,nu_derivatives,nu_derivatives_estimate,F_1} and using the choice of $\lambda_H$, we get
        \begin{displaymath}
            \begin{aligned}
                \dot{\mu}(\tau)
                & \leq \partial_t \nu(\tau, \Delta y^\circ(\cdot))
                + \lambda_H (1 + \|\nabla \nu(\tau, \Delta y^\circ(\cdot))\|)
                \|\Delta y^\circ(\cdot \wedge \tau)\|_\infty \\
                & \leq (\dot{a}(\tau) + \lambda_H (1 /  \sqrt{\varkappa} + a(\tau) / \varkappa))
                \sqrt{\varepsilon + \gamma(\tau, \Delta y^\circ(\cdot))}
            \end{aligned}
        \end{displaymath}
        for a.e. $\tau \in [t, T]$.
        Since $a(\cdot)$ satisfies the differential equation \cref{C_differential_equation}, we conclude that $\dot{\mu}(\tau) \leq 0$ for a.e. $\tau \in [t, T]$ and, therefore, $\mu(t) \geq \mu(T)$.

        Due to \cref{nu,F_initial_condition} and the fact that $\gamma$ is non-anticipative, we have
        \begin{equation} \label{mu_t}
            \mu(t)
            = a(t) \sqrt{\varepsilon + \gamma(t, x_1(\cdot) - x_2(\cdot))}
            + \varphi(t, x_2(\cdot)) - \varphi(t, x_1(\cdot)).
        \end{equation}
        On the other hand, according to \cref{boundary_condition,gamma_bounds,z^circ,F_1} and recalling the choice of $\lambda_\sigma$ and the equality $a(T) = \lambda_\sigma / \sqrt{\varkappa}$, we obtain
        \begin{displaymath}
            \begin{aligned}
                \mu(T)
                & \geq a(T) \sqrt{\varepsilon + \gamma(T, \Delta y^\circ(\cdot))}
                + \sigma(y_2^\circ(\cdot)) - \sigma(y_1^\circ(\cdot)) - \varepsilon (T - t) \\
                & \geq \lambda_\sigma \sqrt{\gamma(T, \Delta y^\circ(\cdot))} / \sqrt{\varkappa}
                - \lambda_\sigma \|\Delta y^\circ(\cdot)\|_\infty - \varepsilon (T - t) \\
                & \geq - \varepsilon (T - t).
            \end{aligned}
        \end{displaymath}
        As a result, we come to the inequality
        \begin{equation} \label{last_inequality}
            a(t) \sqrt{\varepsilon + \gamma(t, x_1(\cdot) - x_2(\cdot))}
            + \varphi(t, x_2(\cdot)) - \varphi(t, x_1(\cdot))
            \geq - \varepsilon (T - t),
        \end{equation}
        which yields \cref{basic} if we pass to the limit as $\varepsilon \to 0^+$.
    \end{proof}

    Analyzing the proof of \cref{theorem_main}, we conclude that, if Lipschitz conditions from \cref{assumption_basic}, (iii), and \cref{assumption_sigma_lip} are global (i.e., $\lambda_H(D)$ and $\lambda_\sigma(D)$ do not actually depend on the choice of a compact set $D \subset C([- h, T], \mathbb{R}^n)$), then the minimax solution $\varphi$ of the Cauchy problem \cref{Cauchy_problem} also satisfies the corresponding global Lipschitz condition.
    This result is formulated below.

    \begin{assumption} \label{assumption_sigma_lip_glob}
        The following conditions are satisfied.

        \smallskip

        \noindent (i)
            There exists a number $\lambda_H > 0$ such that inequality \cref{H_lip} is valid for all $t \in [0, T]$, $x_1(\cdot)$, $x_2(\cdot) \in C([- h, T], \mathbb{R}^n)$, and $s \in \mathbb{R}^n$.

        \smallskip

        \noindent (ii)
            There exists a number $\lambda_\sigma > 0$ such that inequality \cref{sigma_lip} is valid for all $x_1(\cdot)$, $x_2(\cdot) \in C([- h, T], \mathbb{R}^n)$.
    \end{assumption}

    \begin{corollary} \label{corollary_main_glob}
        Let conditions {\rm (i)} and {\rm (ii)} from \cref{assumption_basic} and \cref{assumption_sigma_lip_glob} hold.
        Then, the minimax solution $\varphi$ of the Cauchy problem \cref{Cauchy_problem} satisfies the following condition: there exists a number $\lambda_\varphi > 0$ such that inequality \cref{varphi_lip} is valid for all $t \in [0, T]$ and $x_1(\cdot)$, $x_2(\cdot) \in C([- h, T], \mathbb{R}^n)$.
    \end{corollary}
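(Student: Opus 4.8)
The plan is to reuse the proof of \cref{theorem_main} essentially unchanged, exploiting the observation that the compact set $D$ (together with the auxiliary compact set $K$ built from it in step 1 of that proof) enters the argument only in order to select the Lipschitz constants $\lambda_H$ and $\lambda_\sigma$. Under \cref{assumption_sigma_lip_glob} these constants are already global: part (i) provides a single $\lambda_H > 0$ for which \cref{H_lip} holds for all $t \in [0, T]$, $x_1(\cdot)$, $x_2(\cdot) \in C([- h, T], \mathbb{R}^n)$, and $s \in \mathbb{R}^n$, and part (ii) provides a single $\lambda_\sigma > 0$ for which \cref{sigma_lip} holds for all $x_1(\cdot)$, $x_2(\cdot) \in C([- h, T], \mathbb{R}^n)$. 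In particular, \cref{assumption_sigma_lip_glob} implies conditions (iii) and (iv) of \cref{assumption_basic} and \cref{assumption_sigma_lip}, so \cref{assumption_basic} holds in full, the minimax solution $\varphi$ exists and is unique, and \cref{theorem_main} is applicable. Using these global $\lambda_H$ and $\lambda_\sigma$, I would define the function $a(\cdot)$ and put $\lambda_\varphi \doteq \sqrt{2}\, a(0)$ exactly as in step 1; now $\lambda_\varphi$ is one fixed number, not depending on any compact set.

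With $\lambda_\varphi$ so defined, fix $\varepsilon > 0$ and arbitrary $t \in [0, T]$, $x_1(\cdot)$, $x_2(\cdot) \in C([- h, T], \mathbb{R}^n)$, and repeat steps 2--4 of the proof of \cref{theorem_main} verbatim. In step 2 the $ci$-smooth functional $\nu$ from \cref{nu}, the non-anticipative Hausdorff continuous multivalued map $F$, and the set $S$ of solution triples are constructed; the nonemptiness and compactness of $S$ follow, via \cite[Proposition 4.1]{Lukoyanov_2003_1}, from the sublinear growth bounds \cref{F_1} and the continuity of $H$ and $\nabla \nu$ alone, with no reference to $D$. Step 3 yields a triple $(y_1^\circ(\cdot), y_2^\circ(\cdot), z^\circ(\cdot)) \in S$ satisfying \cref{z^circ} by the same maximality-with-contradiction argument, using only that $\varphi$ is a continuous minimax solution and that $H$, $\nabla \nu$ are continuous. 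In step 4 one forms $\mu(\cdot)$ as in \cref{mu}, differentiates it by \cite[Proposition 1]{Gomoyunov_Lukoyanov_RMS_2024}, bounds $\dot{\mu}(\tau) \leq 0$ for a.e.\ $\tau \in [t, T]$ using \cref{gamma_bounds,F_1}, inequality \cref{H_lip} with the global $\lambda_H$, and the identity \cref{C_differential_equation}, and estimates $\mu(T) \geq - \varepsilon (T - t)$ using \cref{sigma_lip} with the global $\lambda_\sigma$ and $a(T) = \lambda_\sigma / \sqrt{\varkappa}$. Then $\mu(t) \geq \mu(T)$ gives \cref{last_inequality}, and letting $\varepsilon \to 0^+$ yields \cref{basic}; finally \cref{gamma_bounds} and $a(t) \leq a(0)$ give \cref{varphi_lip} with the universal $\lambda_\varphi$ for all $t$ and all $x_1(\cdot)$, $x_2(\cdot) \in C([- h, T], \mathbb{R}^n)$.

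The only point that really has to be checked --- and, I expect, the single mild obstacle --- is the claim that compactness of $D$ is used in \cref{theorem_main} nowhere except in choosing $\lambda_H(K)$ and $\lambda_\sigma(K)$. In step 4 of that proof, $\lambda_H(K)$ is applied through \cref{H_lip} to the pair $y_1^\circ(\cdot)$, $y_2^\circ(\cdot)$; this is legitimate because every element of $S$ automatically belongs to $Y(t, x_1(\cdot))$, respectively $Y(t, x_2(\cdot))$ --- its restriction agrees with $x_i(\cdot)$ on $[- h, t]$ and its derivative obeys the bound \cref{F_1} --- hence lies in $K$ when $x_i(\cdot) \in D$. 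In the global setting \cref{H_lip} holds for arbitrary arguments, so this membership is no longer needed; likewise the closing estimate invokes \cref{sigma_lip} only on the single pair $y_1^\circ(\cdot)$, $y_2^\circ(\cdot)$, which condition (ii) of \cref{assumption_sigma_lip_glob} covers. Once this bookkeeping is in place, the proof of \cref{theorem_main} transfers without further modification.
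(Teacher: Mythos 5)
Your proposal is correct and takes essentially the same route as the paper: \cref{corollary_main_glob} is obtained there precisely by noting that the compact set $D$ enters the proof of \cref{theorem_main} only through the choice of the constants $\lambda_H$ and $\lambda_\sigma$, so that under \cref{assumption_sigma_lip_glob} the same argument runs with the global constants and yields the $D$-independent value $\lambda_\varphi = \sqrt{2}\, a(0)$. Your explicit check that the components of triples in $S$ lie in $K$ (and that this membership becomes superfluous once \cref{H_lip} and \cref{sigma_lip} hold globally) is exactly the bookkeeping the paper leaves implicit.
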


    In addition, from \cref{theorem_main}, we derive a Lipschitz continuity property of the minimax solution $\varphi$ with respect to the time variable $t$.
    Given a number $\lambda_x > 0$, denote by $D_{\lambda_x}$ the set of functions $x(\cdot) \in C([- h, T], \mathbb{R}^n)$ satisfying the conditions
    \begin{displaymath}
        \|x(\tau_1) - x(\tau_2)\|
        \leq \lambda_x |\tau_1 - \tau_2|
        \quad \forall \tau_1, \tau_2 \in [- h, T],
        \quad \|x(\cdot)\|_\infty
        \leq \lambda_x.
    \end{displaymath}
    Observe that the set $D_{\lambda_x}$ is compact in $C([- h, T], \mathbb{R}^n)$.
    \begin{corollary} \label{corollary_lip_t}
        Under \cref{assumption_basic}, {\rm(i)}--{\rm(iii)}, and \cref{assumption_sigma_lip}, the minimax solution $\varphi$ of the Cauchy problem \cref{Cauchy_problem} satisfies the following Lipschitz condition with respect to the time variable $t$: for any $\lambda_x > 0$, there exists $\lambda_\varphi^\circ \doteq \lambda_\varphi^\circ(\lambda_x) > 0$ such that, for any $t_1$, $t_2 \in [0, T]$ and $x(\cdot) \in D_{\lambda_x}$,
        \begin{equation} \label{corollary_Lip_t_condition}
            |\varphi(t_1, x(\cdot)) - \varphi(t_2, x(\cdot))|
            \leq \lambda_\varphi^\circ |t_1 - t_2|.
        \end{equation}
    \end{corollary}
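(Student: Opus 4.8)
The plan is to combine the defining property of the minimax solution $\varphi$, used with the trivial parameter $s = 0$, with the local Lipschitz estimate of \cref{theorem_main}. Fix $\lambda_x > 0$; let $x(\cdot) \in D_{\lambda_x}$ and $t_1$, $t_2 \in [0, T]$, and assume without loss of generality that $t_1 \leq t_2$. The overall goal is to bound $|\varphi(t_1, x(\cdot)) - \varphi(t_2, x(\cdot))|$ by a multiple of $t_2 - t_1$ with a constant depending on $\lambda_x$ only.

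\emph{Localization.} First I would produce a single compact set carrying all the objects involved. By the constraint $\|\dot{y}(\tau)\| \leq c_H(1 + \|y(\cdot \wedge \tau)\|_\infty)$ in the definition of $Y(t_1, x(\cdot))$, the bound $\|x(\cdot \wedge t_1)\|_\infty \leq \lambda_x$, and Gronwall inequality, every $y(\cdot) \in Y(t_1, x(\cdot))$ satisfies $\|y(\cdot)\|_\infty \leq R$ with $R = R(\lambda_x) \doteq (1 + \lambda_x) e^{c_H T} - 1$; being then Lipschitz on $[t_1, T]$ with constant $c_H(1 + R)$ and coinciding with $x(\cdot)$ on $[- h, t_1]$, such a $y(\cdot)$ is Lipschitz on $[- h, T]$ with constant $\max\{\lambda_x, c_H(1 + R)\}$. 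Hence there is $\lambda' = \lambda'(\lambda_x) > 0$ with $D_{\lambda_x} \subseteq D_{\lambda'}$ and $Y(t, x(\cdot)) \subseteq D_{\lambda'}$ for all $t \in [0, T]$ and $x(\cdot) \in D_{\lambda_x}$. Applying \cref{theorem_main} to the compact set $D_{\lambda'}$ gives a constant $\lambda_\varphi \doteq \lambda_\varphi(D_{\lambda'})$, and, by continuity of $H$ and compactness, $M \doteq \max\{|H(\xi, y(\cdot), 0)| \colon \xi \in [0, T], \, y(\cdot) \in D_{\lambda'}\} < \infty$; both $\lambda_\varphi$ and $M$ depend on $\lambda_x$ only.

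\emph{Main step.} By the minimax property at $(t_1, x(\cdot))$ with $s = 0$, there is $y(\cdot) \in Y(t_1, x(\cdot)) \subseteq D_{\lambda'}$ such that $\varphi(\tau, y(\cdot)) = \varphi(t_1, x(\cdot)) - \int_{t_1}^{\tau} H(\xi, y(\cdot), 0) \, \rd \xi$ for all $\tau \in [t_1, T]$; at $\tau = t_2$ this gives $|\varphi(t_2, y(\cdot)) - \varphi(t_1, x(\cdot))| \leq M(t_2 - t_1)$. To compare $\varphi(t_2, y(\cdot))$ with $\varphi(t_2, x(\cdot))$, note that $y(t_1) = x(t_1)$ and $y(\cdot)$, $x(\cdot)$ agree on $[- h, t_1]$, while on $[t_1, t_2]$ one has $\|y(\tau) - y(t_1)\| \leq c_H(1 + R)(\tau - t_1)$ and $\|x(\tau) - x(t_1)\| \leq \lambda_x(\tau - t_1)$; a routine estimate then yields $\|y(\cdot \wedge t_2) - x(\cdot \wedge t_2)\|_\infty \leq (c_H(1 + R) + \lambda_x)(t_2 - t_1)$. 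Invoking \cref{theorem_main} with $D = D_{\lambda'}$ at $t = t_2$ and $x_1(\cdot) = y(\cdot)$, $x_2(\cdot) = x(\cdot)$ bounds $|\varphi(t_2, y(\cdot)) - \varphi(t_2, x(\cdot))|$ by $\lambda_\varphi (c_H(1 + R) + \lambda_x)(t_2 - t_1)$. Combining the two bounds via the triangle inequality gives \cref{corollary_Lip_t_condition} with $\lambda_\varphi^\circ(\lambda_x) \doteq M + \lambda_\varphi(c_H(1 + R) + \lambda_x)$.

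I do not expect any step to be a genuine obstacle; the only point requiring care is the localization, namely exhibiting one compact set $D_{\lambda'}$ — and associated constants depending on $\lambda_x$ alone — that simultaneously contains $x(\cdot)$ and every trajectory $y(\cdot)$ emanating from it, so that \cref{theorem_main} applies uniformly in $t_1$, $t_2$, and $x(\cdot)$.
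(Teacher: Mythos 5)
Your proposal is correct and follows essentially the same argument as the paper: apply the minimax property at $(t_1,x(\cdot))$ with $s=0$, bound $|\varphi(t_1,x(\cdot))-\varphi(t_2,y(\cdot))|$ by the sup of $|H(\cdot,\cdot,0)|$ on a compact set, estimate $\|y(\cdot\wedge t_2)-x(\cdot\wedge t_2)\|_\infty$ via the speed bounds on $y(\cdot)$ and $x(\cdot)$, and conclude with the Lipschitz estimate of \cref{theorem_main} at time $t_2$. The only cosmetic difference is the choice of the enclosing compact set (your explicit ball $D_{\lambda'}$ obtained via Gronwall versus the paper's set $K(D_{\lambda_x})$ of all emanating trajectories), which does not change the argument.
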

    \begin{proof}
        Let $\lambda_x > 0$ be fixed.
        By the set $D_{\lambda_x}$, define the set $K \doteq K(D_{\lambda_x})$ as in step 1 of the proof of \cref{theorem_main}.
        Since $K$ is compact in $C([- h, T], \mathbb{R}^n)$, choose $\lambda_\varphi \doteq \lambda_\varphi(K)$ according to \cref{theorem_main}, consider $R_1 > 0$ such that $\|y(\cdot)\|_\infty \leq R_1$ for all $y(\cdot) \in K$, and, using continuity of $H$, take $R_2 > 0$ such that $|H(t, y(\cdot), 0)| \leq R_2$ for all $t \in [0, T]$ and $y(\cdot) \in K$.
        Put $\lambda_\varphi^\circ \doteq \lambda_\varphi^\circ(\lambda_x) \doteq R_2 + \lambda_\varphi ( c_H (1 + R_1) + \lambda_x)$, where $c_H$ is the number from \cref{assumption_basic}, (ii).

        Fix $t_1$, $t_2 \in [0, T]$ and $x(\cdot) \in D_{\lambda_x}$ and suppose for definiteness that $t_1 \leq t_2$.
        Since $\varphi$ is the minimax solution, there exists $y(\cdot) \in Y(t_1, x(\cdot))$ such that
        \begin{displaymath}
            \varphi(t_2, y(\cdot))
            = \varphi(t_1, x(\cdot))
            - \int_{t_1}^{t_2} H(\xi, y(\cdot), 0) \, \rd \xi.
        \end{displaymath}
        Then, noting that $x(\cdot)$, $y(\cdot) \in K$ by construction, we derive
        \begin{displaymath}
            \begin{aligned}
                |\varphi(t_1, x(\cdot)) - \varphi(t_2, x(\cdot))|
                & \leq |\varphi(t_1, x(\cdot)) - \varphi(t_2, y(\cdot))|
                + |\varphi(t_2, y(\cdot)) - \varphi(t_2, x(\cdot))| \\
                & \leq R_2 (t_2 - t_1) + \lambda_\varphi \|y(\cdot \wedge t_2) - x(\cdot \wedge t_2)\|_\infty.
            \end{aligned}
        \end{displaymath}
        In view of the inclusions $x(\cdot) \in D_{\lambda_x}$ and $y(\cdot) \in Y(t_1, x(\cdot))$, we obtain
        \begin{displaymath}
            \begin{aligned}
                \|y(\cdot \wedge t_2) - x(\cdot \wedge t_2)\|_\infty
                & = \max_{\tau \in [t_1, t_2]} \|y(\tau) - x(\tau)\| \\
                & \leq \max_{\tau \in [t_1, t_2]} \|y(\tau) - y(t_1)\|
                + \max_{\tau \in [t_1, t_2]} \|x(t_1) - x(\tau)\| \\
                & \leq \int_{t_1}^{t_2} \|\dot{y}(\tau)\| \, \rd \tau
                + \lambda_x (t_2 - t_1) \\
                & \leq (c_H (1 + R_1) + \lambda_x) (t_2 - t_1).
            \end{aligned}
        \end{displaymath}
        As a result, we conclude that \cref{corollary_Lip_t_condition} holds with the specified $\lambda_\varphi^\circ$.
    \end{proof}

    \begin{remark} \label{remark_definitions_Lip}
        Lipschitz continuity properties close to those established in \cref{theorem_main,corollary_main_glob,corollary_lip_t} are sometimes included in a definition of a viscosity solution of a Cauchy problem for a path-dependent Hamilton--Jacobi equation (see, e.g., \cite{Kaise_2015,Zhou_2019,Zhou_2020_1,Kaise_2022,Zhou_2022,Hernandez-Hernandez_Kaise_2024}).
        In the cited papers, this is mainly done in order to establish the uniqueness of a viscosity solution.
        However, the existence of such a viscosity solution is proved only in special cases where the Cauchy problem arises from some optimal control problem or differential game for a time-delay system and follows from the fact that the corresponding value functional is a viscosity solution.
        In this regard, \cref{theorem_main} and \cref{corollary_main_glob,corollary_lip_t} may be useful in proving the existence of a viscosity solution in the case of a general Hamiltonian $H$, which is not necessarily of Bellman or Isaacs form.
    \end{remark}

\section{Special norm Lipschitz continuity results}
\label{section_last}

    In this section, we focus on the case where the Hamiltonian $H$ and the boundary functional $\sigma$ from \cref{Cauchy_problem} satisfy local Lipschitz continuity conditions in the functional variable $x(\cdot)$ that are stronger than \cref{assumption_basic}, (iii), and \cref{assumption_sigma_lip}, respectively.

    \begin{assumption} \label{assumption_lip_special}
        The following conditions are satisfied.

        \smallskip

        \noindent (i)
            There exist numbers $J \in \mathbb{N}$ and $\vartheta_j \in (0, h]$, where $j \in \overline{1, J}$, such that, for any compact set $D \subset C([- h, T], \mathbb{R}^n)$, there exists a number $\lambda_H^\ast \doteq \lambda_H^\ast(D) > 0$ such that
            \begin{equation} \label{H_lip_special}
                \begin{aligned}
                    & |H(t, x_1(\cdot), s) - H(t, x_2(\cdot), s)|
                    \leq \lambda_H^\ast (1 + \|s\|)
                    \biggl( \|x_1(t) - x_2(t)\|
                     \\
                    & \quad + \sum_{j = 1}^J \|x_1(t - \vartheta_j) - x_2(t - \vartheta_j)\| + \sqrt{\int_{- h}^{t} \|x_1(\tau) - x_2(\tau)\|^2 \, \rd \tau} \biggr)
                \end{aligned}
            \end{equation}
            for all $t \in [0, T]$, $x_1(\cdot)$, $x_2(\cdot) \in D$, and $s \in \mathbb{R}^n$.

        \smallskip

        \noindent (ii)
            For any compact set $D \subset C([- h, T], \mathbb{R}^n)$, there exists $\lambda_\sigma^\ast \doteq \lambda_\sigma^\ast(D) > 0$ such that
            \begin{equation} \label{sigma_lip_special}
                |\sigma(x_1(\cdot)) - \sigma(x_2(\cdot))|
                \leq \lambda_\sigma^\ast \biggl( \|x_1(T) - x_2(T)\|
                + \sqrt{\int_{- h}^{T} \|x_1(\tau) - x_2(\tau)\|^2 \, \rd \tau} \biggr)
            \end{equation}
            for all $x_1(\cdot)$, $x_2(\cdot) \in D$.
    \end{assumption}

    Note that conditions similar to (i) and (ii) are considered, e.g., in \cite{Lukoyanov_2010_IMM_Eng_1,Lukoyanov_2010_IMM_Eng_2}.
    In view of applications to optimal control problems and differential games for time-delay systems, condition (i) makes it possible to deal with particularly important cases of distributed and constant concentrated delays.

    \begin{theorem} \label{theorem_main_2}
        Let conditions {\rm (i)}, {\rm (ii)} from \cref{assumption_basic} and \cref{assumption_lip_special} hold.
        Then, the minimax solution $\varphi$ of the Cauchy problem \cref{Cauchy_problem} satisfies the following local Lipschitz condition with respect to the functional variable $x(\cdot)$:
        for any compact set $D \subset C([- h, T], \mathbb{R}^n)$, there exists $\lambda_\varphi^\ast \doteq \lambda_\varphi^\ast(D) > 0$ such that
        \begin{equation} \label{varphi_lip_special}
            |\varphi(t, x_1(\cdot)) - \varphi(t, x_2(\cdot))|
            \leq \lambda_\varphi^\ast
            \biggl( \|x_1(t) - x_2(t)\|
            + \sqrt{\int_{- h}^{t} \|x_1(\tau) - x_2(\tau)\|^2 \, \rd \tau} \biggr)
        \end{equation}
        for all $t \in [0, T]$ and $x_1(\cdot)$, $x_2(\cdot) \in D$.
    \end{theorem}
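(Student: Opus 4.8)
The plan is to follow the scheme of the proof of \cref{theorem_main}, replacing the functional $\gamma$ by a Lyapunov--Krasovskii type functional $\omega$ that is commensurable with the square of the right-hand side of \cref{varphi_lip_special} and, crucially, contains ``memory'' integrals accounting for the constant concentrated delays $\vartheta_j$ from \cref{H_lip_special}. After fixing a compact set $D \subset C([- h, T], \mathbb{R}^n)$ and forming the compact set $K \doteq K(D)$ exactly as in step~1 of the proof of \cref{theorem_main}, I would choose $\lambda_H^\ast \doteq \lambda_H^\ast(K)$ and $\lambda_\sigma^\ast \doteq \lambda_\sigma^\ast(K)$ by \cref{assumption_lip_special} and set
\begin{displaymath}
    \omega(t, x(\cdot))
    \doteq \|x(t)\|^2
    + \sum_{j = 1}^{J} \int_{t - \vartheta_j}^{t} \|x(\tau)\|^2 \, \rd \tau
    + \int_{- h}^{t} \|x(\tau)\|^2 \, \rd \tau .
\end{displaymath}
Since $0 < \vartheta_j \leq h$ (so that $t - \vartheta_j \in [- h, T)$ whenever $t \in [0, T)$), one checks directly that $\omega$ is non-anticipative and $ci$-smooth with
\begin{displaymath}
    \partial_t \omega(t, x(\cdot))
    = (J + 1) \|x(t)\|^2 - \sum_{j = 1}^{J} \|x(t - \vartheta_j)\|^2,
    \qquad
    \nabla \omega(t, x(\cdot)) = 2 x(t),
\end{displaymath}
and that $\|x(t)\|^2 \leq \omega(t, x(\cdot))$, $\int_{- h}^{t} \|x(\tau)\|^2 \, \rd \tau \leq \omega(t, x(\cdot))$, and $\omega(t, x(\cdot)) \leq (J + 1) \bigl( \|x(t)\|^2 + \int_{- h}^{t} \|x(\tau)\|^2 \, \rd \tau \bigr)$ for all $(t, x(\cdot)) \in [0, T] \times C([- h, T], \mathbb{R}^n)$.

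For a fixed $\varepsilon > 0$, set $\nu(t, x(\cdot)) \doteq a(t) \sqrt{\varepsilon + \omega(t, x(\cdot))}$ with a $C^1$ function $a \colon \mathbb{R} \to (0, \infty)$ to be chosen below; then $\nu$ is $ci$-smooth with $\partial_t \nu(t, x(\cdot)) = \dot{a}(t) \sqrt{\varepsilon + \omega(t, x(\cdot))} + a(t) \partial_t \omega(t, x(\cdot)) / (2 \sqrt{\varepsilon + \omega(t, x(\cdot))})$ and $\nabla \nu(t, x(\cdot)) = a(t) x(t) / \sqrt{\varepsilon + \omega(t, x(\cdot))}$, so that $\|\nabla \nu(t, x(\cdot))\| \leq a(t)$ thanks to $\|x(t)\|^2 \leq \omega(t, x(\cdot))$. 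With this $\nu$, I would introduce the multivalued mapping $F$, the set $S$, and the sets $M(\tau)$ exactly as in steps~2 and~3 of the proof of \cref{theorem_main}; those two steps then carry over verbatim and produce a triple $(y_1^\circ(\cdot), y_2^\circ(\cdot), z^\circ(\cdot)) \in S$ with $z^\circ(T) \geq \varphi(T, y_2^\circ(\cdot)) - \varphi(T, y_1^\circ(\cdot))$ and $y_1^\circ(\cdot), y_2^\circ(\cdot) \in K$.

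The core of the argument is the estimate of $\dot{\mu}$, where $\mu$ is the function defined by \cref{mu} with the present $\nu$. Writing $\Delta y^\circ \doteq y_1^\circ(\cdot) - y_2^\circ(\cdot)$ and $\omega_\tau \doteq \omega(\tau, \Delta y^\circ(\cdot))$, I would repeat step~4 of the proof of \cref{theorem_main} up to \cref{dot_mu_estimate} and then bound the Hamiltonian difference via \cref{H_lip_special} (with the chosen $\lambda_H^\ast$ and using $\|\nabla \nu\| \leq a$), which leads, for a.e.\ $\tau \in [t, T]$, to an inequality of the form
\begin{displaymath}
    \dot{\mu}(\tau)
    \leq \dot{a}(\tau) \sqrt{\varepsilon + \omega_\tau}
    + \frac{a(\tau)}{2 \sqrt{\varepsilon + \omega_\tau}} \biggl( (J + 1) \|\Delta y^\circ(\tau)\|^2 - \sum_{j = 1}^{J} \|\Delta y^\circ(\tau - \vartheta_j)\|^2 \biggr)
    + \lambda_H^\ast (1 + a(\tau)) R(\tau),
\end{displaymath}
where $R(\tau) \doteq \|\Delta y^\circ(\tau)\| + \sum_{j = 1}^{J} \|\Delta y^\circ(\tau - \vartheta_j)\| + \bigl( \int_{- h}^{\tau} \|\Delta y^\circ(\xi)\|^2 \, \rd \xi \bigr)^{1 / 2}$. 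The decisive step is to apply Young's inequality to the first two groups of terms of $\lambda_H^\ast (1 + a(\tau)) R(\tau)$ --- pairing $\lambda_H^\ast (1 + a(\tau)) \|\Delta y^\circ(\tau)\|$ with $(J + 1) a(\tau) \|\Delta y^\circ(\tau)\|^2 / (2 \sqrt{\varepsilon + \omega_\tau})$ and each $\lambda_H^\ast (1 + a(\tau)) \|\Delta y^\circ(\tau - \vartheta_j)\|$ with $a(\tau) \|\Delta y^\circ(\tau - \vartheta_j)\|^2 / (2 \sqrt{\varepsilon + \omega_\tau})$ --- so that the resulting quadratic contributions cancel against those coming from $\partial_t \omega$, while the remaining summand of $R(\tau)$ is absorbed using $\int_{- h}^{\tau} \|\Delta y^\circ(\xi)\|^2 \, \rd \xi \leq \omega_\tau$. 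This yields $\dot{\mu}(\tau) \leq \bigl( \dot{a}(\tau) + C_1(a(\tau)) \bigr) \sqrt{\varepsilon + \omega_\tau}$ with $C_1(\alpha) = \tfrac{(\lambda_H^\ast)^2}{2} \bigl( \tfrac{1}{J + 1} + J \bigr) \tfrac{(1 + \alpha)^2}{\alpha} + \lambda_H^\ast (1 + \alpha)$.

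It remains to choose $a(\cdot)$ and conclude. Since $(1 + \alpha)^2 / \alpha = 1 / \alpha + 2 + \alpha$, for all $\alpha \geq a(T)$ one has $C_1(\alpha) \leq \alpha_0 + \beta_0 \alpha$ with positive constants $\alpha_0$, $\beta_0$ depending only on $\lambda_H^\ast$, $J$, and $a(T)$; so I would fix $a(T) \doteq 2 \lambda_\sigma^\ast$ and let $a(\cdot)$ solve the linear equation $\dot{a}(\tau) + \beta_0 a(\tau) + \alpha_0 = 0$ with this terminal value, which gives $a(\tau) = (2 \lambda_\sigma^\ast + \alpha_0 / \beta_0) e^{\beta_0 (T - \tau)} - \alpha_0 / \beta_0 \geq a(T) > 0$ for all $\tau \leq T$, legitimizing the bound $C_1(a(\tau)) \leq \alpha_0 + \beta_0 a(\tau)$ and hence $\dot{\mu}(\tau) \leq 0$ for a.e.\ $\tau \in [t, T]$, so that $\mu(t) \geq \mu(T)$. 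Exactly as in the final part of the proof of \cref{theorem_main} --- using \cref{mu,F_initial_condition}, non-anticipativity of $\omega$, the boundary condition \cref{boundary_condition}, the inequality for $z^\circ(T)$, the estimates $\|\Delta y^\circ(T)\|^2 \leq \omega(T, \Delta y^\circ(\cdot))$ and $\int_{- h}^{T} \|\Delta y^\circ(\xi)\|^2 \, \rd \xi \leq \omega(T, \Delta y^\circ(\cdot))$, condition \cref{sigma_lip_special} with $\lambda_\sigma^\ast = \lambda_\sigma^\ast(K)$, and the equality $a(T) = 2 \lambda_\sigma^\ast$ --- one obtains $a(t) \sqrt{\varepsilon + \omega(t, x_1(\cdot) - x_2(\cdot))} + \varphi(t, x_2(\cdot)) - \varphi(t, x_1(\cdot)) \geq - \varepsilon (T - t)$; letting $\varepsilon \to 0^+$, interchanging $x_1(\cdot)$ and $x_2(\cdot)$, and invoking the upper bound on $\omega$ together with $\sqrt{u^2 + v^2} \leq u + v$ for $u, v \geq 0$ yields \cref{varphi_lip_special} with $\lambda_\varphi^\ast \doteq a(0) \sqrt{J + 1}$. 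I expect the only genuinely new obstacle, compared with \cref{theorem_main}, to be the concentrated-delay terms $\|x_1(t - \vartheta_j) - x_2(t - \vartheta_j)\|$ in \cref{H_lip_special}, which cannot be controlled by the current-time value and the $L^2$-type quantity alone; building the memory integrals $\int_{t - \vartheta_j}^{t} \|x(\tau)\|^2 \, \rd \tau$ into $\omega$ --- whose $ci$-time-derivatives contribute precisely the negative quantities $- \|x(t - \vartheta_j)\|^2$ --- is what makes the Young's inequality cancellation possible.
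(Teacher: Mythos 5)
Your proposal is, in substance, the paper's own proof: the same compact set $K$, the same reduction via steps 2 and 3 of the proof of \cref{theorem_main}, a Lyapunov--Krasovskii functional of exactly the form \cref{gamma_2} (current value squared, memory integrals over $[t - \vartheta_j, t]$, full $L^2$ term), the function $\mu$ from \cref{mu}, a decreasing solution $a(\cdot)$ of a linear ODE matched at $T$ to $\lambda_\sigma^\ast$, and the final constant $a(0)\sqrt{\,\cdot\,}$. The only real difference is bookkeeping: the paper puts a large constant weight $\omega \doteq 4 J \lambda_H^\ast \max\{1, 1/(\sqrt{2}\lambda_\sigma^\ast)\}$ on the memory integrals and kills the delayed terms through the sign of $2 J \lambda_H^\ast (1 + a(\tau)) - \omega a(\tau)$ (using $a(\tau) \geq a(T)$), whereas you keep unit weights and kill them by Young's inequality with the $\sqrt{\varepsilon + \omega_\tau}$-dependent parameter, again using $a(\tau) \geq a(T) > 0$. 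These are equivalent maneuvers.

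There is, however, one concrete error in your key estimate. The $ci$-time derivative of your functional contributes $+ (J + 1) a(\tau) \|\Delta y^\circ(\tau)\|^2 / (2 \sqrt{\varepsilon + \omega_\tau})$ with a \emph{positive} sign, so it cannot ``cancel'' against anything produced by Young's inequality applied to $\lambda_H^\ast (1 + a(\tau)) \|\Delta y^\circ(\tau)\|$ --- cancellation is only available for the delayed terms, where $\partial_t \omega$ supplies $- a(\tau) \|\Delta y^\circ(\tau - \vartheta_j)\|^2 / (2 \sqrt{\varepsilon + \omega_\tau})$. The current-time quadratic must instead be bounded via $\|\Delta y^\circ(\tau)\|^2 \leq \omega_\tau \leq \varepsilon + \omega_\tau$, which adds a term $(J + 1) a(\tau) / 2$ (and, if you also apply Young to the $\|\Delta y^\circ(\tau)\|$ term, a second copy of it) that is missing from your $C_1(\alpha)$; as stated, the inequality $\dot{\mu}(\tau) \leq (\dot{a}(\tau) + C_1(a(\tau))) \sqrt{\varepsilon + \omega_\tau}$ is false in general (take $H$ independent of $x(\cdot)$ and $\omega_\tau$ dominated by $\|\Delta y^\circ(\tau)\|^2$). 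The slip is harmless to the architecture: the corrected coefficient is still of the form $\alpha_0 + \beta_0 \alpha$ for $\alpha \geq a(T)$ (enlarge $\beta_0$ by $(J + 1)/2$ or $J + 1$), after which your choice of $a(\cdot)$, the terminal comparison with $a(T) = 2 \lambda_\sigma^\ast$, and the derivation of \cref{varphi_lip_special} with $\lambda_\varphi^\ast = a(0) \sqrt{J + 1}$ go through exactly as you describe; this is also precisely how the paper's computation handles that term (it appears there as $(\omega J + 1) a(\tau)/2$ in \cref{C_2_differential_equation}).
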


    The proof of \cref{theorem_main_2} follows essentially the same lines as that of \cref{theorem_main} except for the choice of the auxiliary functional $\gamma$.
    The proposed construction of the functional $\gamma$ is close to \cite{Lukoyanov_2010_IMM_Eng_1,Lukoyanov_2010_IMM_Eng_2} and goes back to \cite[Section 34]{Krasovskii_1963}, where stability problems for time-delay systems via Lyapunov--Krasovskii functionals are studied.

    \begin{proof}[Proof of \cref{theorem_main_2}]
        Let a compact set $D \subset C([- h, T], \mathbb{R}^n)$ be fixed.
        Take the set $K \doteq K(D)$ as in step 1 of the proof of \cref{theorem_main} and choose $\lambda_H^\ast \doteq \lambda_H^\ast(K)$ and $\lambda_\sigma^\ast \doteq \lambda_\sigma^\ast(K)$ according to conditions (i) and (ii) from \cref{assumption_lip_special}, respectively.
        Denote $\omega \doteq 4 J \lambda_H^\ast \max \{ 1, 1 / (\sqrt{2} \lambda_\sigma^\ast) \}$ and consider the functional
        \begin{equation} \label{gamma_2}
            \gamma(t, x(\cdot))
            \doteq \|x(t)\|^2
            + \omega \sum_{j = 1}^J \int_{t - \vartheta_j}^{t} \|x(\tau)\|^2 \, \rd \tau
            + \int_{- h}^{t} \|x(\tau)\|^2 \, \rd \tau
        \end{equation}
        for all $(t, x(\cdot)) \in [0, T] \times C([- h, T], \mathbb{R}^n)$.
        One can show that $\gamma$ is $ci$-smooth and
        \begin{equation} \label{gamma_2_derivatives}
            \partial_t \gamma (t, x(\cdot))
            = (\omega J + 1) \|x(t)\|^2 - \omega \sum_{j = 1}^J \|x(t - \vartheta_j)\|^2,
            \quad \nabla \gamma (t, x(\cdot))
            = 2 x(t)
        \end{equation}
        for all $(t, x(\cdot)) \in [0, T) \times C([- h, T], \mathbb{R}^n)$.
        Consider the function
        \begin{equation} \label{C_2}
           a(t)
           \doteq 6 \lambda_H^\ast (e^{(\omega J + 1 + 6 \lambda_H^\ast) (T - t) / 2} - 1) / (\omega J + 1 + 6 \lambda_H^\ast)
           + \sqrt{2} \lambda_\sigma^\ast e^{(\omega J + 1 + 6 \lambda_H^\ast) (T - t) / 2}
        \end{equation}
        for all $t \in \mathbb{R}$.
        Note that $a(t) > 0$ for all $t \in [0, T]$, $a(T) = \sqrt{2} \lambda_\sigma^\ast$, and
        \begin{equation} \label{C_2_differential_equation}
            2 \dot{a}(t) + (\omega J + 1 + 6 \lambda_H^\ast) a(t) + 6 \lambda_H^\ast
            = 0
            \quad \forall t \in \mathbb{R}.
        \end{equation}
        Put $\lambda_\varphi^\ast \doteq \lambda_\varphi^\ast(D) \doteq a(0) \sqrt{\omega J + 1}$.
        Hence, in order to complete the proof, it remains to show that
        \begin{equation} \label{basic_2}
            \varphi(t, x_1(\cdot)) - \varphi(t, x_2(\cdot))
            \leq a(t) \sqrt{\gamma(t, x_1(\cdot) - x_2(\cdot))}
        \end{equation}
        for all $t \in [0, T]$ and $x_1(\cdot)$, $x_2(\cdot) \in D$.
        Indeed, for any $t \in [0, T]$ and $x_1(\cdot)$, $x_2(\cdot) \in D$, it follows from \cref{basic_2} and the inequality $a(t) \leq a(0)$ that
        \begin{displaymath}
            \begin{aligned}
                & \varphi(t, x_1(\cdot)) - \varphi(t, x_2(\cdot))
                \leq a(0) \sqrt{\gamma(t, x_1(\cdot) - x_2(\cdot))} \\
                & \quad \leq a(0) \sqrt{\|x_1(t) - x_2(t)\|^2 + (\omega J + 1) \int_{- h}^{t} \|x_1(\tau) - x_2(\tau)\|^2 \, \rd \tau} \\
                & \quad \leq \lambda_\varphi^\ast \biggl( \|x_1(t) - x_2(t)\|
                + \sqrt{\int_{- h}^{t} \|x_1(\tau) - x_2(\tau)\|^2 \, \rd \tau} \biggr).
            \end{aligned}
        \end{displaymath}

        Fix $\varepsilon > 0$ and, by the functional $\gamma$ from \cref{gamma_2} and the function $a(\cdot)$ from \cref{C_2}, define the functional $\nu$ according to \cref{nu}.
        The functional $\nu$ is $ci$-smooth and
        \begin{displaymath}
            \begin{aligned}
                \partial_t \nu(t, x(\cdot))
                & = \dot{a}(t) \sqrt{\varepsilon + \gamma(t, x(\cdot))}
                + \frac{a(t) \partial_t \gamma (t, x(\cdot))}{2 \sqrt{\varepsilon + \gamma(t, x(\cdot))}}, \\
                \nabla \nu(t, x(\cdot))
                & = \frac{a(t) \nabla \gamma (t, x(\cdot))}{2 \sqrt{\varepsilon + \gamma(t, x(\cdot))}}
            \end{aligned}
        \end{displaymath}
        for all $(t, x(\cdot)) \in [0, T) \times C([- h, T], \mathbb{R}^n)$.
        Note that, in view of \cref{gamma_2,gamma_2_derivatives},
        \begin{equation} \label{nu_2_derivatives_estimates}
            \begin{aligned}
                \partial_t \nu(t, x(\cdot))
                & \leq \biggl( \dot{a}(t) + \frac{(\omega J + 1) a(t)}{2} \biggr)
                \sqrt{\varepsilon + \gamma(t, x(\cdot))} \\
                & \quad - \frac{\omega a(t)}{2 \sqrt{\varepsilon + \gamma(t, x(\cdot))}} \sum_{j = 1}^J \|x(t - \vartheta_j)\|^2, \\
                \|\nabla \nu(t, x(\cdot))\|
                &\leq a(t)
            \end{aligned}
        \end{equation}
        for all $(t, x(\cdot)) \in [0, T) \times C([- h, T], \mathbb{R}^n)$.

        Let $t \in [0, T]$ and $x_1(\cdot)$, $x_2(\cdot) \in D$ be fixed.
        Arguing as in steps 2 and 3 of the proof of \cref{theorem_main}, consider the set $S$ of solutions of the corresponding differential inclusion \cref{F_differential_inclusion} under the initial conditions \cref{F_initial_condition} and conclude that there exists a triple $(y_1^\circ(\cdot), y_2^\circ(\cdot), z^\circ(\cdot)) \in S$ satisfying \cref{z^circ}.

        Define the corresponding function $\mu(\cdot)$ by \cref{mu}.
        Then, based on \cref{dot_mu_estimate} and the choice of $\lambda_H^\ast$, we derive
        \begin{displaymath}
            \begin{aligned}
                \dot{\mu}(\tau)
                & \leq \partial_t \nu(\tau, \Delta y^\circ(\cdot))
                + \lambda_H^\ast (1 + \|\nabla \nu(\tau, \Delta y^\circ(\cdot))\|) \\
                & \quad \times \biggl( \|\Delta y^\circ(\tau)\|  + \sum_{j = 1}^J \|\Delta y^\circ(\tau - \vartheta_j)\|
                + \sqrt{\int_{- h}^{\tau} \|\Delta y^\circ(\xi)\|^2 \, \rd \xi} \biggr)
            \end{aligned}
        \end{displaymath}
        for a.e. $\tau \in [t, T]$, where we denote $\Delta y^\circ(\cdot) \doteq y_1^\circ(\cdot) - y_2^\circ(\cdot)$.
        Multiplying both sides of this inequality by $2 \sqrt{\varepsilon + \gamma(\tau, \Delta y^\circ(\cdot))}$ and then using \cref{nu_2_derivatives_estimates} as well as the inequalities
        \begin{displaymath}
            \begin{aligned}
                \|\Delta y^\circ(\tau)\| + \sqrt{\int_{- h}^{\tau} \|\Delta y^\circ(\xi)\|^2 \, \rd \xi}
                & \leq 2 \sqrt{\varepsilon + \gamma(\tau, \Delta y^\circ(\cdot))}, \\
                \sqrt{\varepsilon + \gamma(\tau, \Delta y^\circ(\cdot))}
                \sum_{j = 1}^J \|\Delta y^\circ(\tau - \vartheta_j)\|
                & \leq \varepsilon + \gamma(\tau, \Delta y^\circ(\cdot))
                + J \sum_{j = 1}^J \|\Delta y^\circ(\tau - \vartheta_j)\|^2,
            \end{aligned}
        \end{displaymath}
        we get
        \begin{displaymath}
            \begin{aligned}
                2 \sqrt{\varepsilon + \gamma(\tau, \Delta y^\circ(\cdot))} \dot{\mu}(\tau)
                & \leq \bigl( 2 \dot{a}(\tau) + \omega (J + 1) a(\tau) + 6 \lambda_H^\ast (1 + a(\tau)) \bigr)
                \bigl( \varepsilon + \gamma(\tau, \Delta y^\circ(\cdot)) \bigr) \\
                & \quad + \bigl( 2 J \lambda_H^\ast (1 + a(\tau)) - \omega a(\tau) \bigr)
                \sum_{j = 1}^J \|\Delta y^\circ(\tau - \vartheta_j)\|^2 \\
            \end{aligned}
        \end{displaymath}
        for a.e. $\tau \in [t, T]$.
        The first term in the right-hand side of this inequality vanishes since $a(\cdot)$ satisfies the differential equation \cref{C_2_differential_equation}.
        In addition, for every $\tau \in [t, T]$, noting that $a(\tau) \geq a(T) = \sqrt{2} \lambda_\sigma^\ast$ and recalling the definition of $\omega$, we obtain
        \begin{displaymath}
            2 J \lambda_H^\ast (1 + a(\tau)) - \omega a(\tau)
            \leq 2 J \lambda_H^\ast - \omega \lambda_\sigma^\ast / 2
            + (2 J \lambda_H^\ast - \omega / 2) a(\tau)
            \leq 0.
        \end{displaymath}
        As a result, we find that $\dot{\mu}(\tau) \leq 0$ for a.e. $\tau \in [t, T]$ and, hence, $\mu(t) \geq \mu(T)$.

        In view of \cref{boundary_condition,z^circ} and thanks to the choice of $\lambda_\sigma^\ast$, we have
        \begin{displaymath}
            \begin{aligned}
                \mu(T)
                & \geq a(T) \sqrt{\varepsilon + \gamma(T, \Delta y^\circ(\cdot))}
                + \sigma(y_2^\circ(\cdot)) - \sigma(y_1^\circ(\cdot)) - \varepsilon (T - t) \\
                & \geq \sqrt{2} \lambda_\sigma^\ast \sqrt{\gamma(T, \Delta y^\circ(\cdot))}
                - \lambda_\sigma^\ast \biggl( \|\Delta y^\circ(T)\|
                + \sqrt{\int_{- h}^{T} \|\Delta y^\circ(\tau)\|^2 \, \rd \tau} \biggr)
                - \varepsilon (T - t) \\
                & \geq - \varepsilon (T - t).
            \end{aligned}
        \end{displaymath}
        Thus, taking \cref{mu_t} into account, we come to inequality \cref{last_inequality}, which yields \cref{basic_2} if we pass to the limit as $\varepsilon \to 0^+$.
    \end{proof}

    As in \cref{section_main}, analyzing the proof of \cref{theorem_main_2}, we derive the corresponding global Lipschitz continuity result for the minimax solution $\varphi$.

    \begin{assumption} \label{assumption_lip_special_glob}
        The following conditions are satisfied.

        \smallskip

        \noindent (i)
            There exist numbers $J \in \mathbb{N}$, $\vartheta_j \in (0, h]$, where $j \in \overline{1, J}$, and $\lambda_H^\ast > 0$ such that inequality \cref{H_lip_special} is valid for all $t \in [0, T]$, $x_1(\cdot)$, $x_2(\cdot) \in C([- h, T], \mathbb{R}^n)$, and $s \in \mathbb{R}^n$.

        \smallskip

        \noindent (ii)
            There exists a number $\lambda_\sigma^\ast > 0$ such that inequality \cref{sigma_lip_special} is valid for all $x_1(\cdot)$, $x_2(\cdot) \in C([- h, T], \mathbb{R}^n)$.
    \end{assumption}

    \begin{corollary} \label{corollary_main_2_glob}
        Let conditions {\rm (i)} and {\rm (ii)} from \cref{assumption_basic} and \cref{assumption_lip_special_glob} hold.
        Then, the minimax solution $\varphi$ of the Cauchy problem \cref{Cauchy_problem} satisfies the following condition: there exists $\lambda_\varphi^\ast > 0$ such that inequality \cref{varphi_lip_special} is valid for all $t \in [0, T]$ and $x_1(\cdot)$, $x_2(\cdot) \in C([- h, T], \mathbb{R}^n)$.
    \end{corollary}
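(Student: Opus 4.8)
The plan is to observe that, under the global \cref{assumption_lip_special_glob}, the whole argument of \cref{theorem_main_2} can be carried out with every constant chosen independently of the data $x_1(\cdot)$, $x_2(\cdot)$, and then to conclude by a one-line reduction.

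First, I would note that \cref{assumption_lip_special_glob} implies \cref{assumption_lip_special}: for \emph{every} compact set $D \subset C([-h,T],\mathbb{R}^n)$, inequalities \cref{H_lip_special,sigma_lip_special} hold on $D$ with $\lambda_H^\ast(D) \doteq \lambda_H^\ast$ and $\lambda_\sigma^\ast(D) \doteq \lambda_\sigma^\ast$, the numbers $\lambda_H^\ast$, $\lambda_\sigma^\ast$ as well as the integer $J$ and the delays $\vartheta_j$ being the fixed ones from \cref{assumption_lip_special_glob}. Consequently, in the proof of \cref{theorem_main_2} the constants $\lambda_H^\ast(K)$ and $\lambda_\sigma^\ast(K)$ may be taken equal to $\lambda_H^\ast$ and $\lambda_\sigma^\ast$, so that the quantity $\omega \doteq 4 J \lambda_H^\ast \max\{1, 1/(\sqrt{2}\,\lambda_\sigma^\ast)\}$, the functional $\gamma$ from \cref{gamma_2}, the function $a(\cdot)$ from \cref{C_2}, and hence the number $\lambda_\varphi^\ast \doteq a(0)\sqrt{\omega J + 1}$ do not depend on the choice of $D$.

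Next, given arbitrary $t \in [0,T]$ and $x_1(\cdot)$, $x_2(\cdot) \in C([-h,T],\mathbb{R}^n)$, I would apply \cref{theorem_main_2} with $D \doteq \{x_1(\cdot), x_2(\cdot)\}$, which is compact since it is finite. This yields \cref{varphi_lip_special} for this $t$ and this pair with the constant $\lambda_\varphi^\ast(D)$, and by the previous step $\lambda_\varphi^\ast(D)$ equals the data-independent number $\lambda_\varphi^\ast = a(0)\sqrt{\omega J + 1}$. Since $t$, $x_1(\cdot)$, $x_2(\cdot)$ were arbitrary, \cref{varphi_lip_special} holds for all of them with this single $\lambda_\varphi^\ast$, which is exactly the assertion of the corollary.

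The only point that requires checking is that compactness of $D$ (equivalently, of the associated set $K$) enters the proof of \cref{theorem_main_2} \emph{solely} in order to extract the local Lipschitz constants $\lambda_H^\ast(K)$ and $\lambda_\sigma^\ast(K)$: the nonemptiness and compactness of the solution set $S$ of the differential inclusion \cref{F_differential_inclusion} under the initial conditions \cref{F_initial_condition}, the attainment of the maximum defining $\vartheta$, the use of the minimax property of $\varphi$, and the estimate $\dot{\mu}(\tau) \leq 0$ together with the boundary estimate for $\mu(T)$ are all valid for fixed $x_1(\cdot)$, $x_2(\cdot)$ without reference to any ambient compact set. Once this is confirmed, the reduction above is immediate; equivalently, one may simply rerun the proof of \cref{theorem_main_2} verbatim with $\lambda_H^\ast$ and $\lambda_\sigma^\ast$ in place of $\lambda_H^\ast(K)$ and $\lambda_\sigma^\ast(K)$, dispensing with the auxiliary set $K$ altogether. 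I do not expect any real obstacle here; the whole difficulty of the result is already contained in \cref{theorem_main_2}.
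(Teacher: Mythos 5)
Your proposal is correct and follows essentially the same route as the paper, which simply notes that the constant $\lambda_\varphi^\ast = a(0)\sqrt{\omega J + 1}$ produced in the proof of \cref{theorem_main_2} depends only on $\lambda_H^\ast(K)$, $\lambda_\sigma^\ast(K)$, $J$, and $T$, and hence becomes data-independent under \cref{assumption_lip_special_glob}. Your observation that invoking the \emph{statement} of \cref{theorem_main_2} alone would not suffice (since it does not specify the constant) and that one must track the constant through, or rerun, the proof is exactly the point the paper's remark "analyzing the proof" is making.
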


    \begin{remark}
        In connection with \cref{theorem_main_2,corollary_main_2_glob}, an observation analogous to \cref{remark_definitions_Lip} can be made.
        Here, we limit ourselves to mentioning papers \cite{Plaksin_2020_JOTA,Plaksin_2021_SIAM} where Lipschitz conditions close to those from \cref{theorem_main_2,corollary_main_2_glob} are included in definitions of viscosity solutions of Cauchy problems for path-dependent Hamilton--Jacobi equations.
    \end{remark}

    In addition, as another corollary of \cref{theorem_main_2}, we obtain an infinitesimal criterion for the minimax solution $\varphi$ of the Cauchy problem \cref{Cauchy_problem} in terms of a pair of inequalities for lower and upper derivatives of $\varphi$ in finite-dimensional (or single-valued) directions \cite[Section 5.4]{Lukoyanov_2006_IMM_Eng}.
    Note that similar criteria are known for the value functional of a differential game for a time-delay system \cite[Theorem 3]{Lukoyanov_2010_IMM_Eng_2} (see also, e.g., \cite[Section 3.7]{Gomoyunov_Lukoyanov_RMS_2024}), i.e., in the case where the Hamiltonian $H$ is of Isaacs form.

    Recall that, for a functional $\varphi \colon [0, T] \times C([- h, T], \mathbb{R}^n) \to \mathbb{R}$, its {\it lower} and {\it upper} (right) {\it derivatives} at a point $(t, x(\cdot)) \in [0, T) \times C([- h, T], \mathbb{R}^n)$ in a {\it finite-dimensional direction} $f \in \mathbb{R}^n$ are defined by
    \begin{displaymath}
        \begin{aligned}
            \partial_- \{\varphi(t, x(\cdot)) \mid f \}
            & \doteq \liminf_{\delta \to 0^+}
            \frac{\varphi(t + \delta, y^{(f)}(\cdot)) - \varphi(t, x(\cdot))}{\delta}, \\
            \partial_+ \{\varphi(t, x(\cdot)) \mid f \}
            & \doteq \limsup_{\delta \to 0^+}
            \frac{\varphi(t + \delta, y^{(f)}(\cdot)) - \varphi(t, x(\cdot))}{\delta},
        \end{aligned}
    \end{displaymath}
    where $y^{(f)}(\cdot) \in \Lip(t, x(\cdot))$ is given by $y^{(f)}(\tau) \doteq x(t) + f(\tau - t)$ for all $\tau \in (t, T]$.

    \begin{corollary} \label{corollary_criteria}
        Let conditions {\rm (i)} and {\rm (ii)} from \cref{assumption_basic} and \cref{assumption_lip_special} hold.
        Then, a functional $\varphi \colon [0, T] \times C([- h, T], \mathbb{R}^n) \to \mathbb{R}$ is the minimax solution of the Cauchy problem \cref{Cauchy_problem} if and only if $\varphi$ is non-anticipative and continuous, satisfies the boundary condition \cref{boundary_condition}, the Lipschitz condition from \cref{theorem_main_2}, and the inequalities
        \begin{equation} \label{directional_derivatives_retarded_Lip}
            \begin{aligned}
                \min_{f \in B(t, x(\cdot))}
                \bigl( \partial_- \{\varphi(t, x(\cdot)) \mid f\} - \langle s, f \rangle \bigr)
                + H(t, x(\cdot), s)
                & \leq 0, \\
                \max_{f \in B(t, x(\cdot))}
                \bigl( \partial_+ \{\varphi(t, x(\cdot)) \mid f\} - \langle s, f \rangle \bigr)
                + H(t, x(\cdot), s)
                & \geq 0
            \end{aligned}
        \end{equation}
        for all $(t, x(\cdot)) \in [0, T) \times C([- h, T], \mathbb{R}^n)$ and $s \in \mathbb{R}^n$.
    \end{corollary}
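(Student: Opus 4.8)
The plan is to prove the two implications separately: the ``only if'' direction by a direct limiting argument resting on \cref{theorem_main_2} and the defining property of the minimax solution, and the ``if'' direction by showing that any $\varphi$ with the listed properties must coincide with the (unique) minimax solution, via the comparison principle for lower and upper minimax solutions.

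For the ``only if'' direction, assume $\varphi$ is the minimax solution. Non-anticipativity, continuity, and the boundary condition \cref{boundary_condition} are part of the definition, while the Lipschitz condition is exactly \cref{theorem_main_2}; so only \cref{directional_derivatives_retarded_Lip} has to be checked. Fix $(t, x(\cdot)) \in [0, T) \times C([-h, T], \mathbb{R}^n)$ and $s \in \mathbb{R}^n$, take $y(\cdot) \in Y(t, x(\cdot))$ from the definition of the minimax solution (so that $\varphi(\tau, y(\cdot)) = \varphi(t, x(\cdot)) + \int_t^\tau (\langle s, \dot y(\xi)\rangle - H(\xi, y(\cdot), s))\,\rd\xi$ for all $\tau \in [t, T]$), and pick a sequence $\delta_k \to 0^+$ along which the averaged velocities $f_k \doteq (y(t + \delta_k) - y(t))/\delta_k$ converge to some $f$; since $\|\dot y(\xi)\| \le c_H(1 + \|y(\cdot \wedge \xi)\|_\infty)$ for a.e. $\xi$, the limit satisfies $\|f\| \le c_H(1 + \|x(\cdot \wedge t)\|_\infty)$, i.e.\ $f \in B(t, x(\cdot))$. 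Applying the Lipschitz condition of \cref{theorem_main_2} to the pair $y(\cdot), y^{(f)}(\cdot)$ (which coincide with $x(\cdot)$ on $[-h, t]$), together with the elementary bounds $\|y(\tau) - y^{(f)}(\tau)\| = O(\delta_k)$ on $[t, t + \delta_k]$ and $\|y(t + \delta_k) - y^{(f)}(t + \delta_k)\| = \delta_k \|f_k - f\| = o(\delta_k)$, gives $\varphi(t + \delta_k, y^{(f)}(\cdot)) = \varphi(t + \delta_k, y(\cdot)) + o(\delta_k)$; combined with continuity of $H$ and non-anticipativity (whence $H(\xi, y(\cdot), s) \to H(t, x(\cdot), s)$ as $\xi \to t^+$), this yields $(\varphi(t + \delta_k, y^{(f)}(\cdot)) - \varphi(t, x(\cdot)))/\delta_k \to \langle s, f\rangle - H(t, x(\cdot), s)$. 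Passing to the liminf and limsup over all $\delta \to 0^+$ then produces $\partial_-\{\varphi(t, x(\cdot)) \mid f\} \le \langle s, f\rangle - H(t, x(\cdot), s) \le \partial_+\{\varphi(t, x(\cdot)) \mid f\}$ for this single $f \in B(t, x(\cdot))$, which is \cref{directional_derivatives_retarded_Lip}.

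For the ``if'' direction, I would use that under \cref{assumption_basic} the minimax solution exists and is unique, and—by the ``only if'' direction just proved—it possesses all the listed properties; hence it suffices to show that any $\varphi$ with these properties equals it. The idea is to verify that such a $\varphi$ is simultaneously a lower and an upper minimax solution in the sense of the standard theory (see \cite{Lukoyanov_2003_1} and \cite{Gomoyunov_Lukoyanov_RMS_2024}), so that the comparison principle for lower and upper minimax solutions forces the equality. From the first inequality in \cref{directional_derivatives_retarded_Lip} one constructs, for each $(t_0, x_0(\cdot))$ and $s$, a trajectory $y(\cdot) \in Y(t_0, x_0(\cdot))$ along which $\tau \mapsto \varphi(\tau, y(\cdot)) - \int_{t_0}^\tau (\langle s, \dot y(\xi)\rangle - H(\xi, y(\cdot), s))\,\rd\xi$ is nonincreasing, by a step-by-step viability-type argument: at the current point one selects, via the inequality, a direction $f \in B(\cdot, \cdot)$ and extends $y$ linearly with velocity $f$ over a short interval on which the difference quotient of $\varphi$ along $y^{(f)}$ is controlled, then passes to the limit of refining polygonal trajectories inside the compact set $Y(t_0, x_0(\cdot))$; the second inequality yields the dual, nondecreasing trajectory. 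Here the special-norm Lipschitz condition is precisely what makes the finite-dimensional directions $y^{(f)}$ adequate, since it bounds by $o(\delta)$ the effect of replacing a true (polygonal or limiting) trajectory by the frozen-memory linear continuation; this is also the content of the criterion of \cite[Section 5.4]{Lukoyanov_2006_IMM_Eng}, now freed of an a priori Lipschitz hypothesis thanks to \cref{theorem_main_2}.

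The main obstacle is the ``if'' direction, and within it the viability-type construction of the witnessing trajectories out of the merely pointwise directional inequalities—requiring a measurable-selection and limiting argument inside $Y(t_0, x_0(\cdot))$—together with the careful use of the special-norm Lipschitz estimate to pass from the linear continuations appearing in $\partial_\pm\{\varphi(t, x(\cdot)) \mid f\}$ to genuine trajectories. Once $\varphi$ is recognised as both a lower and an upper minimax solution, its identification with the minimax solution is immediate from the comparison principle, and the ``only if'' direction is routine given \cref{theorem_main_2}.
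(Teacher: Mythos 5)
Your route is genuinely different from the paper's, and one half of it is left open. The paper does not reprove anything: it notes that, by \cref{theorem_main_2}, the minimax solution satisfies the special-norm Lipschitz condition, and then assembles the statement from two cited results --- the infinitesimal criterion for minimax solutions in terms of lower and upper derivatives in \emph{multi-valued} directions \cite[Theorem 5]{Gomoyunov_Lukoyanov_RMS_2024} (see also \cite[Theorem 8.1]{Lukoyanov_2003_1}), and \cite[Lemma 1]{Lukoyanov_2010_IMM_Eng_2}, which, exactly under the Lipschitz condition of \cref{theorem_main_2}, expresses those multi-valued directional derivatives through the finite-dimensional ones, so that \cref{directional_derivatives_retarded_Lip} becomes a reformulation of the known criterion. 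Your ``only if'' half, by contrast, is a direct argument and is essentially correct: extracting a cluster direction $f$ of the averaged velocities of the trajectory provided by the definition of the minimax solution, and using the special-norm estimate to replace $y(\cdot)$ by $y^{(f)}(\cdot)$ with an $o(\delta_k)$ error (the uniform-norm estimate of \cref{theorem_main} would only give $O(\delta_k)$, so you have located precisely where \cref{assumption_lip_special} is needed), yields one $f \in B(t,x(\cdot))$ with $\partial_-\{\varphi(t,x(\cdot)) \mid f\} \leq \langle s, f\rangle - H(t,x(\cdot),s) \leq \partial_+\{\varphi(t,x(\cdot)) \mid f\}$, which gives both inequalities at once.

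The genuine gap is in the ``if'' direction, which is the heart of the corollary and which you yourself flag as the main obstacle. Passing from the pointwise inequalities \cref{directional_derivatives_retarded_Lip} to trajectories $y(\cdot) \in Y(t,x(\cdot))$ defined on all of $[t,T]$ along which $\tau \mapsto \varphi(\tau,y(\cdot)) - \int_t^\tau \bigl( \langle s, \dot y(\xi)\rangle - H(\xi,y(\cdot),s) \bigr)\,\rd\xi$ is monotone is exactly the nontrivial content of the results the paper cites: the lower-derivative inequality only provides, at each point, a direction $f$ and \emph{some} sequence of step lengths with controlled increment up to $\varepsilon$, and one must splice such steps into a trajectory reaching $T$ (a maximal-interval or compactness argument in the solution set of the associated differential inclusion), again use the $o(\delta)$ replacement estimate to pass from the frozen linear continuations $y^{(f)}$ to the polygonal/limit trajectories, and finally let $\varepsilon \to 0^+$; none of this is carried out --- it is asserted as a ``viability-type argument'' with measurable selection. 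Similarly, the reduction via ``$\varphi$ is a lower and an upper minimax solution, hence equals the minimax solution by comparison'' is plausible but left imprecise (which notion of lower/upper solution, and why it matches the present definition through equality along a single trajectory). As written, therefore, the hard implication is a plan rather than a proof; it can be closed either by executing that construction in detail or, as the paper does, by invoking \cite[Theorem 5]{Gomoyunov_Lukoyanov_RMS_2024} together with \cite[Lemma 1]{Lukoyanov_2010_IMM_Eng_2}.
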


    In \cref{directional_derivatives_retarded_Lip}, $B(t, x(\cdot))$ denotes the ball in $\mathbb{R}^n$ with centre at the origin and radius $c_H (1 + \|x(\cdot \wedge t)\|_\infty)$, the number $c_H$ is taken from \cref{assumption_basic}, (ii).
    Note that the minimum and the maximum in \cref{directional_derivatives_retarded_Lip} are attained since, due to the Lipschitz condition from \cref{theorem_main_2}, the functions $f \mapsto \partial_- \{\varphi(t, x(\cdot)) \mid f\}$ and $f \mapsto \partial_+ \{\varphi(t, x(\cdot)) \mid f\}$ are lower and upper semicontinuous, respectively.

    To verify \cref{corollary_criteria}, it is sufficient to apply the infinitesimal criterion for the minimax solution of the Cauchy problem \cref{Cauchy_problem} in terms of lower and upper derivatives in multi-valued directions \cite[Theorem 5]{Gomoyunov_Lukoyanov_RMS_2024} (see also \cite[Theorem 8.1]{Lukoyanov_2003_1}) and then use the formulas for calculating the derivatives in multi-valued directions via the derivatives in finite-dimensional directions in the case of functionals satisfying the Lipschitz condition from \cref{theorem_main_2} \cite[Lemma 1]{Lukoyanov_2010_IMM_Eng_2}.

    We conclude the paper with the observation that the method for proving Lipschitz continuity properties of the minimax solution $\varphi$ proposed in the present paper seems to be quite universal and developable to other assumptions regarding Lipschitz continuity properties of the Hamiltonian $H$ and/or the boundary functional $\sigma$.

\end{document}